\newtheorem{prop}{Proposition}[section]
\newtheorem{thm}[prop]{Theorem}
\newtheorem{cor}[prop]{Corollary}
\newtheorem{lem}[prop]{Lemma}
\theoremstyle{definition}
\newtheorem{defn}[prop]{Definition}
\newtheorem{rmk}[prop]{Remark}
\numberwithin{equation}{section}
\renewcommand{\Re}{\mathbb R}
\renewcommand{\epsilon}{\varepsilon}
\newcommand{\Ren}{\Re^n}
\renewcommand{\phi}{\varphi}
\renewcommand{\epsilon}{\varepsilon}
\newcommand{\mcuerpos}{K_1,\dots ,K_m}
\newcommand{\sumam}[1]{K_1+_{#1}\cdots+_{#1} K_m}
\DeclareMathOperator{\conv}{conv}
\DeclareMathOperator{\interior}{int}
\begin{document}

\title[B-M and Zhang inequalities for Convolution Bodies]{Brunn-Minkowski and Zhang inequalities for Convolution Bodies}

 \author[B-M and Zhang inequalities for Convolution Bodies]{David Alonso-Guti\'errez${}^*$ \and C. Hugo~Jim\'{e}nez${}^\dag{}^\ddag$\and Rafael~Villa${}^\dag$}

\address{C. Hugo Jim\'enez\and Rafael Villa, Dept. de An\'alisis Matem\'atico, Facultad de Matem\'aticas, Universidad de Sevilla, PO Box 1160, 41080 Sevilla, Spain}
\email{carloshugo@us.es,villa@us.es}
\address{David Alonso-Guti\'errez, Dept. of Math. and Stats., 632 Central Academic Building, University of Alberta, Edmonton, Ab, Canada T6G 2G1}
\email{alonsogu@ualberta.ca}

 \thanks{${}^\dag$ Authors partially supported by the Spanish Ministerio de Econom\'ia y Competitividad grant MTM2012-30748 and by the Junta de Andaluc\'{i}a, grant P08-FQM-03543.}
 \thanks{${}^\ddag$ Author partially supported by CONACyT}
\thanks{${}^*$ Spanish grant MTM2010-16679 and "Programa de Ayudas a Grupos de Excelencia de la región de Murcia", Fundación Séneca, 04540/GERM/06}

%\date{} % Activate to display a given date or no date (if empty),
         % otherwise the current date is printed

\maketitle

\begin{abstract}
A quantitative version of Minkowski sum, extending the definition of $\theta$-convolution of convex bodies, is studied to obtain extensions of the Brunn-Minkowski and Zhang inequalities, as well as, other interesting properties on Convex Geometry involving convolution bodies or polar projection bodies. The extension of this new version to more than two sets is also given.

\end{abstract}
\keywords{Zhang inequality, Brunn-Minkowski inequality, Convolution Body.}

\section{Introduction and motivation}

The Minkowski sum of two sets $A,B\subseteq\Re^n$ is defined as the set
$$
A+B=\{x\in\Re^n\,:\,A\cap(x-B)\neq\emptyset\}.
$$
The essential sum in terms of measure is defined as
$$
A+_eB=\{x\in\Re^n: |A\cap(x-B)|>0\},
$$
for $A,B\subseteq\Re^n$ measurable sets, when $|\cdot|$ denotes the Lebesgue measure in $\Re^n$. A quantitative version of this definition, involving the proportional measure of the intersections, gives the following subset of $A+B$
$$
A+_\theta B=\{x\in A+B: |A\cap(x-B)|\geq\theta M(A,B)\}
$$
 for $\theta\in[0,1]$, whenever $M(A,B):=\displaystyle{\sup_{x\in A+B}|A\cap (x-B)|}$ is finite. This set is called the \emph{$\theta$-convolution set} of $A$ and $B$. Note that $A+_0B$ is the usual Minkowski sum $A+B$. This set is studied  for symmetric convex bodies%
\footnote{A convex body is a compact convex subset of $\Ren$ with non-empty interior.}
in \cite{Kie,Sch1,M-R-S,Sch2} and \cite{Tso}, where the term {\it convolution body} is first introduced. However, our notation differs from the one used there, in order to emphasize the connection with the standard Minkowski sum. Properties of $\theta$-convolution bodies are given in Section \ref{propertiesandresults}.

%%%%
%%%%NEW
%%%%

Recently, there is an increasing interest in finding extensions of the classical
integral geometry of the motion group in Euclidean spaces to the group of translations
(see \cite{Schn2} and the references therein), motivated by possible
applications to the stochastic geometry of homogeneous random geometric structures.

%%%%
%%%%NEW
%%%%

Our purpose is to find volume estimates, from above and below, of the $\theta$-convolution of two sets. In what follows we will motivate our interest in studying the volume of this family of sets.

The celebrated Brunn-Minkowski inequality states that
\begin{equation*}
|K+L|^{\frac{1}{n}}\geq |K|^{\frac{1}{n}}+|L|^{\frac{1}{n}}
\end{equation*}
for two convex bodies $K,L\subset\Ren$, with equality if and only if $K$ and $L$ are homothetic. Recall that $K$ and $L$ are called \textit{homothetic} if $L=z+\lambda K$ for some $z\in\Ren$ and $\lambda>0$. Two classical references in this topic are \cite{Bo-Fe} and \cite{Schn1}.

Brunn-Minkowski inequility has been widely applied to solve a large number of problems involving geometrical quantities such as volume, surface area, and mean width. In the last thirty years the Brunn-Minkowski inequality has become an essential analytical tool to develop the so-called Local Theory of Normed Spaces and Convex Geometric Analysis \cite{Gar2,Schn1,Bo-Fe,M-S}. Extensions of this inequality for non convex sets, even for non-measurable sets, have been also studied. We  refer only to \cite{Had} for details and references.

In Section \ref{B-M_ineq_for_theta} a generalization of the Brunn-Minkowski inequality is studied. Even though extensive work with this inequality as backbone has emerged both within the class of convex bodies \cite{Fir,Lut1,Lut2} and under other settings \cite{Bor,Vit,Uhr,B-L}, we pursuit something closer in spirit to \cite{Bart1,Sz-V}. See \cite{Gar1} for a comprehensive survey on the Brunn-Minkowski inequality including extensions, applications and its relation to other analytical inequalities.\\ Namely, we pose the problem of finding the best function $\phi_n(\theta)$ such that
\begin{equation}\label{ineqcontheta}
|K+_\theta L|^{\frac{1}{n}}\geq \phi_n(\theta)^{\frac{1}{n}}(|K|^{\frac{1}{n}}+|L|^{\frac{1}{n}})
\end{equation}
for any convex bodies $K,L\subset\Ren$. It is proved that $\phi_n(\theta)=(1-\theta^{\frac{1}{n}})^n$ satisfies (\ref{ineqcontheta}). Some particular cases are also studied.

Following the work of Kiener \cite{Kie}, Schmuckensl\"{a}ger \cite{Sch1} proved that for any convex body $K$ of volume 1,
\begin{equation}\label{Sc}
(1-\theta)\Pi^*(K)\subseteq K+_\theta (-K)\subseteq\log\frac{1}{\theta}\Pi^*(K)
\end{equation}
where $\Pi^*(K)$ is the polar projection body of $K$, the unit ball of the norm $\Vert x\Vert_{\Pi^*(K)}=|x||P_{x^\bot}K|$. Here $P_{x^\bot}$ denotes the orthogonal projection on the hyperplane orthogonal to $x$.

These inclusions imply $\displaystyle{|K|\Pi^*(K)=\lim_{\theta\to 1^-}\frac{K+_\theta (-K)}{1-\theta}}$ in the Haussdorff metric for any convex body $K$.

In Section \ref{polarprojectionbodies} we modify the argument to improve the estimate (\ref{Sc}) (see Proposition \ref{Schmuckeslagerseveralbodies})
$$
(1-\theta)|K|\Pi^*(K)\subseteq K+_\theta (-K)\subseteq n(1-\theta^\frac{1}{n})|K|\Pi^*(K).
$$

The most famous inequality concerning the volume of the polar projection body of a convex body $K\subset\Re^n$ is Petty projection inequality,
$$
|K|^{n-1}|\Pi^*(K)|\leq\left(\frac{\omega_n}{\omega_{n-1}}\right)^n
$$
where $\omega_n$ denotes the volume of the $n$-dimensional Euclidean ball. The equality is attained provided $K$ is an ellipsoid (See \cite{Pet}). A different proof using convolutions can be found in \cite{Sch2}.

In \cite{Zha}, Zhang proved a reverse form of this inequality
\begin{equation}\label{inq:zhang}
|K|^{n-1}|\Pi^*(K)|\geq\frac{1}{n^n}{2n \choose n}
\end{equation}
for any convex body, with equality if and only if $K$ is a simplex. Zhang inequality can be written as
\begin{equation}\label{inq:zhang2}
\left|\lim_{\theta\to 1^-}\frac{K+_\theta (-K)}{1-\theta} \right|\geq\frac{1}{n^n}{2n \choose n}|K|.
\end{equation}

It is worth mentioning that Tsolomitis  studies in \cite{Tso} the behavior of limiting convolution bodies
\begin{equation}\label{limitcon}
\lim_{\theta\to 1^-}\frac{K+_\theta L}{(1-\theta)^\alpha}
\end{equation}
for symmetric convex bodies $K$ and $L$, and some exponent $\alpha$, giving some regularity conditions under which (for some specific $\alpha$) the limit (\ref{limitcon}) is  non-degenerated,  denoted by $C(K,L)$.

In \cite{R-S1}, Rogers and Shephard obtained the inequality
\begin{equation}\label{ineq:RoS1}
|K-K|\leq{2n \choose n}|K|
\end{equation}
for any convex body $K$, with equality if and only if $K$ is a simplex.
Throughout the proof it is showed that
\begin{equation}\label{eqn:RoS2}
K+_\theta(-K)\supseteq (1-\theta^{\frac{1}{n}})(K-K)
\end{equation}
with equality if and only if $K$ is a simplex. They also showed in \cite{Ro-Sh2} the extension for two different convex bodies
\begin{equation}\label{eqn:RoS3}
|K-L||K\cap L|\leq {2n \choose n}|K||L|.
\end{equation}

The last part of Section \ref{polarprojectionbodies} is devoted to generalize the inclusions stated in (\ref{Sc}) and Zhang inequality (\ref{inq:zhang}) for limiting convolutions of different convex bodies. This generalization is a consequence of Corollary \ref{increasingintheta}, from which (\ref{eqn:RoS3}) can be obtained (see Proposition \ref{rmk:RoS3}).

%%
%%NEW
%%

Given that some classical geometric inequalities are recovered and extended through the use of convolution bodies,
it would be natural to consider the %natural
extension of the convolution body of more than two bodies in order to generalize these inequalities.
In Section \ref{mbodies} we study such an extension obtaining similar inequalities when we consider more than two bodies. Surprisingly, it turns out when studying the equality cases, that these inequalities can only be sharp when convoluting two convex bodies and not when considering three or more bodies.

%%
%%NEW
%%

\section{Properties of the $\theta$-convolution of convex bodies}\label{propertiesandresults}

In this section we give some properties of the $\theta$-convolution of two convex bodies, from which the Brunn-Minkowski-type inequality for the $\theta$-convolution of convex bodies $|K+_\theta L|^{\frac{1}{n}}\geq (1-\theta^{\frac{1}{n}})\left(|K|^{\frac{1}{n}}+|L|^{\frac{1}{n}}\right)$ will follow. However, this bound is not sharp, as we will see below.

We now list some basic properties of $M(K,L)$ and the $\theta$-convolution in the following\\

\begin{prop}\label{prop:properties}
Let $K,L\subset\Re^n$ be compact sets, $\lambda\geq0$, $x\in\Re^n$  and $T\in GL_n(\Re)=\{T:\Re^n\rightarrow\Re^n \ :\ T \text{ is linear}\}$.

\begin{itemize}
\item[(a1)]$M(K,L)=M(L,K)$.
\item[(a2)]$M(x+K,L)=M(K,L)$.
\item[(a3)]$M(\lambda K,\lambda L)=\lambda^nM(K,L)$.
\item[(a4)]$M(TK,TL)=|\det T|M(K,L)$.
\item[(a5)]If $K=-L$ or $K,L$ are symmetric, then $M(K,L)=|K\cap (-L)|$.
\end{itemize}
Let  $\theta\in [0,1]$.
\begin{itemize}
\item[(b1)]$(\lambda K)+_{\theta}(\lambda L)=\lambda(K+_{\theta}L)$.
\item[(b2)]$K+_{\theta}L=L+_\theta K$.
\item[(b3)]$(x+K)+_{\theta}L=x+(K+_{\theta}L)$.
\item[(b4)]$TK+_{\theta}TL=T(K+_{\theta}L)$.
\end{itemize}
\end{prop}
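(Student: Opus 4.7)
The plan is to verify each item directly from the definitions, with the only substantive step being (a5); everything else is change of variables.

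The key observation that organises the proof is that the function
\[
f_{K,L}(x):=|K\cap(x-L)| = \int_{\Re^n}\chi_K(y)\chi_L(x-y)\,dy = (\chi_K*\chi_L)(x)
\]
is a convolution of characteristic functions. Hence $M(K,L)=\sup_x f_{K,L}(x)$ and $K+_\theta L=\{x: f_{K,L}(x)\ge\theta M(K,L)\}$. With this identification, properties (a1)--(a4) follow from general properties of convolution and the Lebesgue measure. For (a1), the change of variable $y\mapsto x-y$ is an isometry sending $K\cap(x-L)$ onto $L\cap(x-K)$; for (a2), translating $K$ by $x$ simply relabels the supremum variable; for (a3) and (a4), the identities $|\lambda A|=\lambda^n|A|$ and $|TA|=|\det T|\,|A|$ push through the supremum after the substitution $y\mapsto\lambda y$, resp.\ $y\mapsto Ty$. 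These verifications are one-line computations.

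The only part with real content is (a5). Here I would invoke that $\chi_K$ and $\chi_L$ are log-concave, so by the Prékopa--Leindler inequality their convolution $f_{K,L}$ is log-concave on $\Re^n$, hence quasiconcave. If $K$ and $L$ are both centrally symmetric then $f_{K,L}$ is even, and an even quasiconcave function attains its supremum at the origin; therefore $M(K,L)=f_{K,L}(0)=|K\cap(-L)|$. In the case $K=-L$, a direct computation gives $f_{K,L}(-x)=(\chi_{-L}*\chi_L)(-x)=(\chi_L*\chi_{-L})(x)=f_{K,L}(x)$ (after $y\mapsto -y$), so $f_{K,L}$ is again even and the same argument yields $M(K,L)=|K\cap(-L)|=|K|$. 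This is the only step I expect to be subtle, and the potential obstacle is simply ensuring that log-concavity of the convolution (Prékopa--Leindler) is the right tool rather than Brunn--Minkowski applied slice-wise; both work, but Prékopa--Leindler is cleaner.

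For the properties of $+_\theta$, each one is immediate once (a1)--(a4) are in place, because the condition defining $K+_\theta L$ is homogeneous in the measure. Explicitly, in (b1) one writes $|\lambda K\cap(x-\lambda L)|=\lambda^n|K\cap(x/\lambda-L)|$ and $M(\lambda K,\lambda L)=\lambda^n M(K,L)$ by (a3), so the $\lambda^n$ factor cancels and $x\in(\lambda K)+_\theta(\lambda L)$ iff $x/\lambda\in K+_\theta L$. Property (b2) follows from $f_{K,L}(x)=f_{L,K}(x)$ together with (a1); (b3) from $f_{x+K,L}(y)=f_{K,L}(y-x)$ together with (a2); and (b4) from $f_{TK,TL}(Tx)=|\det T|f_{K,L}(x)$ together with (a4). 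In each case the cancellation of the common multiplicative/translational factor on both sides of the defining inequality closes the argument, so no further obstacle arises.
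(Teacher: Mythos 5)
Your verification of (a1)--(a4) and (b1)--(b4) is correct, and the paper itself offers no proof of this proposition, treating these as routine; so there is no competing argument to compare against. The convolution identification $f_{K,L}=\chi_K*\chi_L$ and the resulting change-of-variables computations are exactly right.

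One point deserves flagging in (a5). The proposition as stated asks only that $K,L$ be \emph{compact}, but your argument for the symmetric case invokes log-concavity of $\chi_K$ and $\chi_L$ (via Pr\'ekopa--Leindler), which requires $K$ and $L$ to be \emph{convex}. This is not a flaw in your reasoning so much as an imprecision in the statement: for symmetric compact non-convex sets (a5) is simply false. For instance, in $\Re^1$ take $K=[-1,-1/2]\cup[1/2,1]$ and $L=[-1/4,1/4]$; both are symmetric and compact, $|K\cap(-L)|=0$, yet $|K\cap(3/4-L)|=1/2$. So convexity is genuinely needed in the symmetric case, and your proof is only valid (as it should be, in the context of this paper) for convex bodies. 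By contrast, the case $K=-L$ needs neither convexity nor quasiconcavity: here $x-L=x+K$, so $|K\cap(x-L)|\le|K|=|K\cap(-L)|$ for every $x$, and the supremum is trivially attained at the origin. Worth separating the two cases, since the second is a one-liner while the first carries the real hypothesis. Your claim that an even quasiconcave function attains its supremum at the origin is correct: if $f(x_0)>f(0)$ then $\{f\ge f(x_0)\}$ is convex and contains $\pm x_0$, hence $0$, a contradiction; continuity of $\chi_K*\chi_L$ on the compact set $K+L$ guarantees the supremum is attained. A minor notational remark: the definition of $K+_\theta L$ in the paper additionally requires $x\in K+L$, which matters only for $\theta=0$; your identity $K+_\theta L=\{x:f_{K,L}(x)\ge\theta M(K,L)\}$ should be read with that caveat, but it does not affect any of the (b) arguments since $K+L$ transforms equivariantly under translation, dilation, and $GL_n$.
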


A first question about this $\theta$-convolution is its convexity, provided that $K$ and $L$ are both convex. The affirmative answer is a consequence of the following result. In what follows, using $(b3)$ in Proposition \ref{prop:properties} above, we will assume without loss of generality, that \begin{equation}\label{Mzero}
M(K,L)=|K\cap(-L)|.
\end{equation}

\begin{prop}\label{incconvexity}
    Let $K,L\subset\Re^n$ be convex bodies satisfying (\ref{Mzero}). Then for every $\theta_1,\theta_2, \lambda_1,\lambda_2 \in[0,1]$ such that $\lambda_1+\lambda_2\leq 1$ we have that
    \begin{equation}
    \label{inclusion1:convexity}
    \lambda_1 (K+_{\theta_1}L)+ \lambda_2(K+_{\theta_2}L)\subseteq K+_{\theta} L,
    \end{equation}
    where $1-\theta^{\frac{1}{n}}=\lambda_1(1-\theta_1^{\frac{1}{n}})+\lambda_2(1-\theta_2^{\frac{1}{n}}).$
    \end{prop}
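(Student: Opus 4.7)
The plan is to unpack what it means for a point to lie in $K+_{\theta_i}L$ as a lower bound on the volume of a certain slice of $K$, and then combine these slices by convexity and Brunn--Minkowski. Concretely, if $x_i \in K+_{\theta_i}L$ then $A_i := K\cap(x_i-L)$ is a (nonempty) convex set with $|A_i|\geq \theta_i M(K,L)$. The elementary observation I would use is the following inclusion: whenever $\mu_1,\mu_2,\mu_3\geq 0$ satisfy $\mu_1+\mu_2+\mu_3=1$ and $y_i\in\Re^n$, convexity of $K$ and $L$ yields
\begin{equation*}
\mu_1\bigl(K\cap(y_1-L)\bigr)+\mu_2\bigl(K\cap(y_2-L)\bigr)+\mu_3\bigl(K\cap(y_3-L)\bigr)\subseteq K\cap\bigl((\mu_1 y_1+\mu_2 y_2+\mu_3 y_3)-L\bigr),
\end{equation*}
since each summand on the left lies in $K$ (so the convex combination does, by convexity of $K$) and the $L$--components $y_i - z_i$ combine convexly in $L$.

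To handle the constraint $\lambda_1+\lambda_2\leq 1$ (rather than $=1$), I would exploit the normalization (\ref{Mzero}): the point $0$ achieves the maximum, hence $0\in K+_1 L$, and this will play the role of a ``free'' third convolution body with weight $\lambda_3:=1-\lambda_1-\lambda_2\geq0$ and parameter $\theta_3=1$. So, given $x_1\in K+_{\theta_1}L$ and $x_2\in K+_{\theta_2}L$, I set $x_3=0$ and apply the previous inclusion with $(\mu_1,\mu_2,\mu_3)=(\lambda_1,\lambda_2,\lambda_3)$ and $y_i=x_i$, obtaining
\begin{equation*}
\lambda_1 A_1+\lambda_2 A_2+\lambda_3\bigl(K\cap(-L)\bigr)\subseteq K\cap\bigl((\lambda_1 x_1+\lambda_2 x_2)-L\bigr).
\end{equation*}

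Now I would apply the classical Brunn--Minkowski inequality to the left-hand side (all three summands are convex) and use $|A_i|\geq \theta_i M(K,L)$ together with $|K\cap(-L)|=M(K,L)$:
\begin{equation*}
\bigl|K\cap\bigl((\lambda_1 x_1+\lambda_2 x_2)-L\bigr)\bigr|^{1/n}\geq\bigl(\lambda_1\theta_1^{1/n}+\lambda_2\theta_2^{1/n}+\lambda_3\bigr)M(K,L)^{1/n}.
\end{equation*}
A direct rearrangement shows that the bracketed factor equals exactly $1-\lambda_1(1-\theta_1^{1/n})-\lambda_2(1-\theta_2^{1/n})=\theta^{1/n}$, so $\lambda_1 x_1+\lambda_2 x_2$ indeed belongs to $K+_\theta L$, which is (\ref{inclusion1:convexity}).

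I do not foresee serious obstacles: the only subtle point is the use of the auxiliary body with weight $\lambda_3$ (which is what lets the statement allow $\lambda_1+\lambda_2<1$ rather than only the affine case), and one should record that $\theta\in[0,1]$, i.e.\ that the quantity $\lambda_1(1-\theta_1^{1/n})+\lambda_2(1-\theta_2^{1/n})$ is at most $1$; this is immediate from $1-\theta_i^{1/n}\leq 1$ and $\lambda_1+\lambda_2\leq1$, so no restriction beyond the hypotheses is needed.
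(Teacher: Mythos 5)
Your proof is correct and follows essentially the same route as the paper: you form the same three-term Minkowski sum inclusion (with the auxiliary set $K\cap(-L)$ of full measure carrying weight $\lambda_3=1-\lambda_1-\lambda_2$, justified by the normalization $0\in K+_1 L$), apply Brunn--Minkowski, and simplify the resulting exponent to $\theta^{1/n}$. The only cosmetic difference is that the paper states the convexity inclusion with $K$ fixed and the sets $A_i=x_i-L$ varying, while you phrase it directly for the intersections $K\cap(y_i-L)$; these are the same observation.
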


    \begin{proof}
    Let $x_1\in K+_{\theta_1}L$ and $x_2\in K+_{\theta_2}L$. From the general inclusion
$$
K\cap(\lambda_0A_0+\lambda_1A_1+\lambda_2A_2)
\supset
\lambda_0K\cap A_0+\lambda_1K\cap A_1+\lambda_2K\cap A_2
$$
where $K$ is convex and $\lambda_0+\lambda_1+\lambda_2=1$, and using the convexity of  $L$, we have
    \begin{equation}\label{inclusion0:convexity}
    K\cap (\lambda_1 x_1+\lambda_2 x_2-L)\supseteq(1-\lambda_1-\lambda_2)[K\cap(-L)]+\lambda_1[K\cap(x_1-L)]+\lambda_2[K\cap (x_2-L)].
    \end{equation}
    Taking volumes, using the classical Brunn-Minkowski inequality and the fact that $x_i\in K+_{\theta_i}L$ we have
    \begin{equation}\label{BM:convexity}
    |K\cap(\lambda_1x_1+\lambda_2x_2-L)|\geq[1-\lambda_1(1- \theta_1^{\frac{1}{n}})-\lambda_2(1-\theta_2^{\frac{1}{n}})]^nM(K,L),
    \end{equation}
    which proves that $\lambda_1x_1+\lambda_2x_2\in K+_\theta L$ for $\theta=[1-\lambda_1(1- \theta_1^{\frac{1}{n}})-\lambda_2(1-\theta_2^{\frac{1}{n}})]^n$.
       \end{proof}

Taking $\theta_1=\theta_2$ and $\lambda_2=1-\lambda_1$ we have

\begin{cor}
Let $K,L\subset\Re^n$ be convex bodies and $\theta\in[0,1]$. Then $K+_\theta L$ is  convex.
\end{cor}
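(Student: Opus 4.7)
The corollary is an immediate specialization of Proposition \ref{incconvexity}, so the plan is essentially to verify that the substitution $\theta_1=\theta_2=\theta$ and $\lambda_2=1-\lambda_1$ gives precisely the convexity statement.

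First I would address the standing assumption (\ref{Mzero}) that $M(K,L)=|K\cap(-L)|$: since convexity of a set is preserved under translation, and since property $(b3)$ of Proposition \ref{prop:properties} tells us that translating $K$ only translates $K+_\theta L$, it suffices to prove convexity after replacing $K$ by an appropriate translate $x+K$ for which $M(x+K,L)=|(x+K)\cap(-L)|$ holds. Thus we may invoke Proposition \ref{incconvexity} without loss of generality.

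Next, given $x_1,x_2\in K+_\theta L$ and $\lambda_1\in[0,1]$, set $\theta_1=\theta_2=\theta$ and $\lambda_2=1-\lambda_1$, so that $\lambda_1+\lambda_2=1\leq 1$. The value of $\theta'$ produced by Proposition \ref{incconvexity} satisfies
\begin{equation*}
1-(\theta')^{1/n}=\lambda_1\bigl(1-\theta^{1/n}\bigr)+\lambda_2\bigl(1-\theta^{1/n}\bigr)=(\lambda_1+\lambda_2)\bigl(1-\theta^{1/n}\bigr)=1-\theta^{1/n},
\end{equation*}
whence $\theta'=\theta$. The inclusion (\ref{inclusion1:convexity}) then reads
\begin{equation*}
\lambda_1 x_1+(1-\lambda_1)x_2\in \lambda_1(K+_\theta L)+(1-\lambda_1)(K+_\theta L)\subseteq K+_\theta L,
\end{equation*}
which is the definition of convexity of $K+_\theta L$.

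There is no real obstacle here; the entire content sits in Proposition \ref{incconvexity}, whose proof already handled the delicate point (the Brunn-Minkowski-type lower bound on the volume of the intersection $K\cap(\lambda_1 x_1+\lambda_2 x_2-L)$). The only small care needed is to remember that the hypothesis (\ref{Mzero}) has been assumed without loss of generality via a translation, and to note that the choice $\lambda_1+\lambda_2=1$ makes the exponent on the right-hand side collapse back to $\theta$.
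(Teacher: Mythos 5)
Your proof is correct and is exactly the paper's intended argument: the paper derives the corollary by the identical specialization $\theta_1=\theta_2$, $\lambda_2=1-\lambda_1$ in Proposition \ref{incconvexity}. The only addition you make—explicitly justifying via $(b3)$ that the normalization (\ref{Mzero}) can be assumed—is a point the paper leaves implicit, and you handle it correctly.
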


The following result on $K+_1L$ will be used later on, and it is a consequence of Proposition \ref{incconvexity}.

\begin{cor}\label{convolution1:equality}
Let $K,L\subset\Re^n$ be convex bodies. For any $x\in K+_1 L$, $K\cap (x-L)$ is a translation of $K\cap(-L)$.

\end{cor}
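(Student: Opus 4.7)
The plan rests on recognizing that membership in $K+_1L$ is an equality condition on the $\sup$ that defines $M(K,L)$. By assumption (\ref{Mzero}) the supremum is attained at $0$, so $M(K,L)=|K\cap(-L)|$; consequently, $x\in K+_1L$ forces
$$
|K\cap(x-L)| = M(K,L) = |K\cap(-L)|.
$$
The goal then reduces to showing these two convex sets, which have the same volume, are translates of one another.

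The key ingredient is the convexity inclusion
$$
\lambda[K\cap(x-L)]+(1-\lambda)[K\cap(-L)]\subseteq K\cap(\lambda x-L),
$$
valid for every $\lambda\in[0,1]$. This is just the specialization of inclusion (\ref{inclusion0:convexity}) with $x_1=x$, $x_2=0$, $\lambda_1=\lambda$, $\lambda_2=1-\lambda$ (the $1-\lambda_1-\lambda_2$ term vanishes), and it follows directly from writing a generic point in each intersection and using the convexity of $K$ and of $L$ separately. Taking volumes and applying the classical Brunn-Minkowski inequality yields
$$
|K\cap(\lambda x-L)|^{1/n}\geq \lambda M(K,L)^{1/n}+(1-\lambda)M(K,L)^{1/n}=M(K,L)^{1/n}.
$$
Combined with the trivial upper bound $|K\cap(\lambda x-L)|\leq M(K,L)$ coming from the definition of $M$, this sandwich forces equality throughout.

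The main step is then to invoke the equality case of Brunn-Minkowski: the convex bodies $K\cap(x-L)$ and $K\cap(-L)$ both have positive volume $M(K,L)$ (assuming $K+_1L$ is nonempty, which is what makes the statement nonvacuous), and the equality in Brunn-Minkowski forces them to be homothetic. Since a homothety between two sets of the same nonzero volume has ratio $1$, they must in fact be translates of each other, as claimed. I do not anticipate a serious obstacle; the only point worth stating carefully is that the equality case of Brunn-Minkowski applies because both intersections are genuine convex bodies of positive volume.
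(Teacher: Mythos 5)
Your proof is correct and takes essentially the same approach as the paper's: it instantiates inclusion (\ref{inclusion0:convexity}), applies Brunn--Minkowski, deduces equality from the sandwich with $M(K,L)$, and then invokes the Brunn--Minkowski equality case (homothety plus equal volume gives translation). The only cosmetic difference is that you specialize to $x_1=x$, $x_2=0$, $\lambda_1+\lambda_2=1$, whereas the paper keeps $\lambda_1+\lambda_2\le1$ and works with the three-term version of the inclusion; both lead to the same conclusion by the same mechanism.
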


\begin{proof}
Taking $\theta_1=\theta_2=1$ in Proposition \ref{incconvexity}, we get $\theta=1$, and then Brunn-Minkowski inequality (\ref{BM:convexity}), obtained from (\ref{inclusion0:convexity}), holds with equality. Then (see equality cases in Brunn-Minkowski inequality in \cite{Br}) the four sets involved in (\ref{inclusion0:convexity}),
$$
K\cap (\lambda_1x_1+\lambda_2x_2-L),
\hskip.5cm
K\cap(-L),
\hskip.5cm
K\cap(x_1-L),
\hskip.5cm
K\cap(x_2-L),
$$
are all homothetic. As they all have the same volume (equals to $M(K,L)$), homotheties are indeed translations.
\end{proof}

Taking $\theta_1=\theta_2$ leads us to the following

\begin{cor}\label{increasingintheta}
Let $K,L\subset\Re^n$ be convex bodies satisfying (\ref{Mzero}). Then, for every $0\leq\theta_0\leq \theta<1$ we have
$$
\frac{K+_{\theta_0} L}{1-{\theta_0}^\frac{1}{n}}
\subseteq
\frac{K+_{\theta} L}{1-\theta^\frac{1}{n}}.
$$
\end{cor}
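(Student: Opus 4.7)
The plan is to specialize Proposition \ref{incconvexity} by setting $\theta_1=\theta_2=\theta_0$. Under this choice the compatibility relation $1-\theta^{1/n}=\lambda_1(1-\theta_1^{1/n})+\lambda_2(1-\theta_2^{1/n})$ collapses to $1-\theta^{1/n}=(\lambda_1+\lambda_2)(1-\theta_0^{1/n})$, so only the sum $\lambda_1+\lambda_2$ matters. The hypothesis $\theta_0\le\theta<1$ guarantees that
$$\mu:=\frac{1-\theta^{1/n}}{1-\theta_0^{1/n}}\in[0,1],$$
so I can take any $\lambda_1,\lambda_2\in[0,1]$ with $\lambda_1+\lambda_2=\mu$ (for instance $\lambda_1=\lambda_2=\mu/2$, so that the constraint $\lambda_1+\lambda_2\le 1$ of Proposition \ref{incconvexity} is satisfied).

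With this choice Proposition \ref{incconvexity} yields
$$\lambda_1(K+_{\theta_0}L)+\lambda_2(K+_{\theta_0}L)\subseteq K+_\theta L.$$
Since $K+_{\theta_0}L$ is convex by the preceding corollary, Minkowski sums satisfy $\lambda_1 A+\lambda_2 A=(\lambda_1+\lambda_2)A$ whenever $A$ is convex, so the left side equals $\mu(K+_{\theta_0}L)$. Dividing through by $1-\theta^{1/n}=\mu(1-\theta_0^{1/n})$ produces exactly the desired inclusion. The corollary is thus essentially a reparametrization of Proposition \ref{incconvexity}; the only observation required is that $1-\theta^{1/n}$, rather than $\theta$ itself, is the correct scale, which is precisely what makes $\mu$ lie in $[0,1]$ exactly when $\theta_0\le\theta$. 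I do not foresee any real obstacle.
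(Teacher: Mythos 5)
Your proof is correct and follows essentially the same route as the paper: specialize Proposition \ref{incconvexity} with $\theta_1=\theta_2=\theta_0$, note that only $\lambda_1+\lambda_2$ matters, use convexity of $K+_{\theta_0}L$ to collapse $\lambda_1 A+\lambda_2 A$ to $(\lambda_1+\lambda_2)A$, and observe that $\lambda_1+\lambda_2\le 1$ is equivalent to $\theta_0\le\theta$. No gaps.
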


\begin{proof}
Taking $\theta_1=\theta_2=\theta_0$ in the above proposition, for any $\lambda_1,\lambda_2\in [0,1]$ such that $\lambda_1+\lambda_2\leq 1$
$$
\lambda_1(K+_{\theta_0} L)+
\lambda_2(K+_{\theta_0} L)
\subseteq
(\lambda_1+\lambda_2)(K+_{\theta_0} L)
\subseteq
 K+_{\theta} L,
$$
with $1-\theta^{\frac{1}{n}}=(\lambda_1+\lambda_2)(1-{\theta_0}^\frac{1}{n})$. Since $\lambda_1+\lambda_2=\displaystyle{\frac{1-\theta^\frac{1}{n}}{1-\theta_0^\frac{1}{n}}}$,
$$
\frac{1-\theta^\frac{1}{n}}{1-\theta_0^\frac{1}{n}}(K+_{\theta_0} L)
\subseteq
K+_{\theta} L
$$
whenever $\lambda_1+\lambda_2\leq 1$, which means $0\leq\theta_0\leq \theta\leq 1$.
\end{proof}

The following extension of (\ref{eqn:RoS2}) from Rogers and Shephard's work  will be used to get a first Brunn-Minkowski-type inequality.

\begin{cor}\label{inclusion:BM}
Let $K,L\subset\Re^n$ be convex bodies. Then
\begin{equation}\label{conttheta}
\theta^\frac{1}{n}(K+_1L)+(1-\theta^\frac{1}{n})(K+L)\subseteq K+_\theta L.
\end{equation}
\end{cor}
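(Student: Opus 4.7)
The statement is a direct specialization of Proposition \ref{incconvexity}. The plan is to apply that proposition with the choices $\theta_1 = 1$, $\theta_2 = 0$, $\lambda_1 = \theta^{1/n}$, $\lambda_2 = 1 - \theta^{1/n}$. These satisfy $\lambda_1, \lambda_2 \in [0,1]$ and $\lambda_1 + \lambda_2 = 1 \le 1$. The defining equation for the resulting parameter becomes
$$\lambda_1(1-\theta_1^{1/n}) + \lambda_2(1-\theta_2^{1/n}) = \theta^{1/n}\cdot 0 + (1-\theta^{1/n})\cdot 1 = 1 - \theta^{1/n},$$
so the new $\theta$ appearing on the right-hand side of the inclusion coincides precisely with the $\theta$ in our corollary. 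Recalling from the introduction that $K +_0 L = K + L$, Proposition \ref{incconvexity} yields exactly the desired inclusion.

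One small technical point to address is that Proposition \ref{incconvexity} is stated under the normalization (\ref{Mzero}), whereas Corollary \ref{inclusion:BM} makes no such assumption. I would dispense with this by invoking property (b3) of Proposition \ref{prop:properties}: choose $x \in \Re^n$ with $M(K,L) = |K \cap (x - L)|$ and translate $K$ by $-x$ so that (\ref{Mzero}) holds for the translated body. Since (b3) shifts each of $K+_1 L$, $K+L$, and $K+_\theta L$ by the same vector $-x$, the claimed inclusion is invariant under this translation.

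I do not expect any real obstacle: once the parameters are matched, the proof is essentially a one-line substitution into Proposition \ref{incconvexity}. The only thing to double-check is that the endpoint case $\theta_2 = 0$ is legitimately covered by Proposition \ref{incconvexity} (it is, since $\theta_i \in [0,1]$ is allowed) and that $\theta_1 = 1$ does not cause issues in the Brunn-Minkowski step (\ref{BM:convexity}) inside the proof of Proposition \ref{incconvexity}; this is fine because in that step one only needs $|K \cap (x_1 - L)| \ge \theta_1 M(K,L)$, and the inequality behaves well at $\theta_1 = 1$.
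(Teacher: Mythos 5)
Your proof is correct and matches the paper's own proof exactly: both apply Proposition \ref{incconvexity} with $\theta_1=1$, $\theta_2=0$, $\lambda_1=\theta^{1/n}$, $\lambda_2=1-\theta^{1/n}$. Your remark about normalizing via a translation (property (b3)) is also precisely the reduction the paper makes just before stating Proposition \ref{incconvexity}.
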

\begin{proof}
Take $\theta_1=1$, $\theta_2=0$, $\lambda_1=\theta^\frac{1}{n},$ and $\lambda_2=(1-\theta^\frac{1}{n})$ in Proposition \ref{incconvexity} to obtain the desired result.
\end{proof}

Note that the condition $0\in K+_1L$ is equivalent to $M(K,L)=|K\cap (-L)|$, and that is verified under the assumptions of Proposition \ref{prop:properties} $(a5)$.

We will use the following description of the boundary of $K+_\theta L$.

\begin{lem}\label{boundary}
Let $K,L\subset\Re^n$ be convex bodies, and $\theta\in[0,1)$. Then
$$
\partial (K+_\theta L)
=
\{x\in K+L : |K\cap (x-L)|=\theta M(K,L)\}.
$$
In particular, for $\theta=0$,
$$
\partial (K+L)
=
\{x\in K+L : |K\cap (x-L)|=0\}.
$$
Consequently, for any $x\in K+L$, $x\not\in K+_1 L$, there exists a unique $\theta\in [0,1)$ such that $x\in\partial (K+_\theta L)$.
\end{lem}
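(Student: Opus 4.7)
The plan is to work with the continuous function $f: \Ren \to \Re$ defined by $f(x) = |K \cap (x - L)|$, so that, by definition, $K+_\theta L = \{x \in K+L : f(x) \geq \theta M(K,L)\}$. Continuity of $f$ follows in the usual way (dominated convergence, or continuity of volume on compact convex sets with respect to the Hausdorff metric). Combined with convexity of $K$ and $L$ and the standard inclusion
$$K \cap ((1-t)x_1 + t x_2 - L) \supseteq (1-t)\bigl(K \cap (x_1-L)\bigr) + t\bigl(K \cap (x_2-L)\bigr),$$
the Brunn-Minkowski inequality shows that $f^{1/n}$ is concave on $K+L$.

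I would first settle the case $\theta = 0$, by showing $\{x \in K+L : f(x) = 0\} = \partial(K+L)$. If $x \in \partial(K+L)$, a supporting hyperplane at $x$ with outer normal $u$ satisfies $\langle x, u\rangle = h_K(u) + h_L(u)$, which forces $K \cap (x - L)$ to lie in the hyperplane $\{z : \langle z, u\rangle = h_K(u)\}$ and hence to have measure zero. Conversely, for $x \in \inter(K+L)$, the intersection $K \cap (x - L)$ is a convex body with nonempty interior and so $f(x) > 0$.

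For $\theta \in (0,1)$, continuity of $f$ already gives $\{f > \theta M\} \subseteq \inter(K+_\theta L)$ and $\{f < \theta M\} \subseteq \Ren \setminus (K+_\theta L)$, so $\partial(K+_\theta L) \subseteq \{x : f(x) = \theta M\}$. The reverse inclusion is the delicate part. Pick $x^* \in K+L$ attaining $M(K,L)$, which exists since $f$ is continuous and $K+L$ is compact; suppose, seeking a contradiction, that $f(x) = \theta M$ but $x \in \inter(K+_\theta L)$. Then for small $s > 0$ the point $x(s) := x + s(x - x^*)$ still lies in $K+_\theta L$, and
$$x = \tfrac{s}{1+s}\, x^* + \tfrac{1}{1+s}\, x(s)$$
is a convex combination. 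Applying the concavity of $f^{1/n}$ along this segment yields
$$(\theta M)^{1/n} = f(x)^{1/n} \geq \tfrac{s}{1+s} M^{1/n} + \tfrac{1}{1+s} (\theta M)^{1/n},$$
which simplifies to $\theta \geq 1$, contradicting $\theta < 1$. Hence $x \in \partial(K+_\theta L)$.

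The final assertion is then immediate: for $x \in K+L$ with $x \notin K+_1 L$ one has $0 \leq f(x) < M(K,L)$, so $\theta := f(x)/M(K,L) \in [0,1)$ is the unique value placing $x$ on $\partial(K+_\theta L)$, uniqueness being forced by the identity $f(x) = \theta M(K,L)$ from the main statement. The step I expect to be the main obstacle is this Brunn-Minkowski extrapolation: one must arrange the convex combination so that $x$ lies \emph{between} a maximizer $x^*$ of $f$ and a point in the putative interior of $K+_\theta L$ (rather than the reverse), so that the lower bound produced by Brunn-Minkowski strictly exceeds $(\theta M)^{1/n}$ and forces the contradiction.
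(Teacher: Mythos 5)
Your proposal is correct and follows essentially the same route as the paper: establish concavity of $x\mapsto |K\cap(x-L)|^{1/n}$ on $K+L$ via Brunn--Minkowski, observe that the closed level set $\{f\ge\theta M\}$ is $K+_\theta L$ and the open strict sublevel/superlevel sets handle one inclusion, and then show a point with $f(x)=\theta M$ cannot be interior by pushing slightly past $x$ along the ray from a maximizer of $f$ and applying concavity to force $\theta\ge1$. The paper normalizes so the maximizer is at the origin (assumption~(\ref{Mzero})) and writes $x=\lambda(\lambda^{-1}x)+(1-\lambda)\cdot 0$, which after translation is exactly your decomposition $x=\tfrac{s}{1+s}x^*+\tfrac{1}{1+s}x(s)$; your separate supporting-hyperplane treatment of $\theta=0$ is a valid alternative but unnecessary, since the main concavity argument already covers that case.
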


\begin{proof}
Let $f:\Ren\to[0,+\infty)$ be given by $f(x)=|K\cap (x-L)|^{\frac{1}{n}}$. Using properties of the Lebesgue measure and Brunn-Minkowski inequality, it can be deduced that $f$ is continuous on $\Ren$ and concave on $K+L$. It is then clear that $K+_\theta L$ is  closed.

The first assertion %$\partial (K+_\theta L)=\{x\in K+L : f(x)^n=\theta M(K,L)\}$
is equivalent to $\interior(K+_\theta L)=\{x\in K+L : f(x)^n>\theta M(K,L)\}$. Since the right-hand set is open and it is contained in $K+_\theta L$, it remains to be shown the inclusion
$\interior(K+_\theta L)\subseteq\{x\in K+L : f(x)^n>\theta M(K,L)\}.$

Let us take any $x\in K+L$, with $f(x)^n=\theta M(K,L)$. It is left to show that $x\not\in\interior(K+_\theta L)$.

We will assume without loss of generality that (\ref{Mzero}) is satisfied. For any $\lambda\in(0,1)$, write $x=\lambda(\lambda^{-1} x)+(1-\lambda)\cdot 0$. If $\lambda^{-1} x\in K+L$, using the concavity of $f$ on $K+L$,
$$
(\theta M(K,L))^{\frac{1}{n}}
=
f(x)
\ge
\lambda f(\lambda^{-1} x)+(1-\lambda^{-1})f(0)
=
\lambda f(\lambda^{-1} x)+(1-\lambda^{-1})M(K,L)^{\frac{1}{n}}
$$
and therefore
$$
f(\lambda^{-1} x)^n
\le
\frac{\theta^{\frac{1}{n}} - (1-\lambda^{-1})}{\lambda}M(K,L)^{\frac{1}{n}}<(\theta M(K,L))^{\frac{1}{n}}.
$$

Then $\lambda^{-1} x\not\in K+_\theta L$ for any $\lambda<1$ (for $\lambda^{-1}x\notin K+L$ it is trivial). Hence $x\not\in\interior(K+_\theta L)$.

To obtain the last assertion, just take $\theta=\displaystyle\frac{|K\cap (x-L)|}{M(K,L)}\in[0,1)$.
\end{proof}

Equality cases in Proposition \ref{incconvexity} and Corollary \ref{inclusion:BM} are stated in the following result.

\begin{prop}\label{equalityconvexity1}
Let $K,L\subset\Re^n$ be convex bodies. The following conditions are equivalent.
\begin{enumerate}[(i)]
\item For every $\theta_1,\theta_2, \lambda_1,\lambda_2 \in[0,1]$ such that $\lambda_1+\lambda_2= 1$, we have
$$%\displaystyle
\lambda_1 (K+_{\theta_1}L)+ \lambda_2(K+_{\theta_2}L)
=
K+_{\theta} L,
$$
where $\theta^\frac{1}{n}=\lambda_1\theta_1^\frac{1}{n}+\lambda_2\theta_2^\frac{1}{n}.$

\item For every $\theta\in[0,1]$,
$\displaystyle
{\theta}^\frac{1}{n}(K+_1 L)+
({1-{\theta}^\frac{1}{n}})(K+L)
=
{K+_{\theta} L}.
$
\item $K$ and $-L$ are homothetic $n$-simplices.
\end{enumerate}

\end{prop}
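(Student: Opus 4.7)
I will prove $(i)\Leftrightarrow(ii)$ and $(ii)\Leftrightarrow(iii)$ separately. The equivalence $(i)\Leftrightarrow(ii)$ is elementary. Specializing the hypothesis of $(i)$ to $\theta_1=1$, $\theta_2=0$, $\lambda_1=\theta^{\frac{1}{n}}$, $\lambda_2=1-\theta^{\frac{1}{n}}$ (so that $\lambda_1+\lambda_2=1$ and $\lambda_1\theta_1^{\frac{1}{n}}+\lambda_2\theta_2^{\frac{1}{n}}=\theta^{\frac{1}{n}}$) gives $(ii)$ directly. Conversely, applying $(ii)$ to $\theta_1$ and $\theta_2$ separately and then forming the Minkowski combination with $\lambda_1+\lambda_2=1$,
\begin{align*}
\lambda_1(K+_{\theta_1}L)+\lambda_2(K+_{\theta_2}L)
&=\bigl(\lambda_1\theta_1^{\frac{1}{n}}+\lambda_2\theta_2^{\frac{1}{n}}\bigr)(K+_1L)+\bigl(1-\lambda_1\theta_1^{\frac{1}{n}}-\lambda_2\theta_2^{\frac{1}{n}}\bigr)(K+L)\\
&=\theta^{\frac{1}{n}}(K+_1L)+(1-\theta^{\frac{1}{n}})(K+L)=K+_{\theta}L,
\end{align*}
where the last equality is $(ii)$ itself applied at the target parameter.

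For $(iii)\Rightarrow(ii)$, properties $(b3)$ and $(b4)$ in Proposition \ref{prop:properties} reduce us to $K=S$, a standard $n$-simplex with an apex at $0$, and $L=-\alpha S$ with $0<\alpha\leq 1$. Using $S\ominus\alpha S=(1-\alpha)S$ and extending the calculation that underlies $(\ref{eqn:RoS2})$, the concave function $f(x)=|S\cap(x+\alpha S)|^{\frac{1}{n}}$ attains its maximum $\alpha|S|^{\frac{1}{n}}$ on the plateau $K+_1L=(1-\alpha)S$ and is affine along every segment joining this plateau to $\partial(K+L)=\partial(S-\alpha S)$. Therefore each super-level set of $f$ is the Minkowski interpolation $\theta^{\frac{1}{n}}(K+_1L)+(1-\theta^{\frac{1}{n}})(K+L)$, which is exactly $(ii)$.

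The main obstacle is $(ii)\Rightarrow(iii)$. Assume $(ii)$ and (\ref{Mzero}). Revisiting the Brunn-Minkowski step (\ref{BM:convexity}) of Proposition \ref{incconvexity} with $\theta_1=1$, $\theta_2=0$: equality in $(ii)$ at a level $\theta\in(0,1)$ forces every boundary point $x\in\partial(K+_\theta L)$ to decompose as $x=\theta^{\frac{1}{n}}x_1+(1-\theta^{\frac{1}{n}})x_2$ with $x_1\in K+_1L$ and $x_2\in\partial(K+L)$, and to saturate Brunn-Minkowski in (\ref{BM:convexity}). Since $|K\cap(x_2-L)|=0$, the equality cases of Brunn-Minkowski force $K\cap(x_2-L)$ to collapse to a single point and $K\cap(x-L)$ to be a $\theta^{\frac{1}{n}}$-scaled translate of $K\cap(x_1-L)$, which by Corollary \ref{convolution1:equality} is itself a translate of $K\cap(-L)$. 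Hence every cross-section $K\cap(x-L)$ is a homothetic copy of the fixed body $K\cap(-L)$, with ratio depending linearly on $x$ along each ray emanating from $K+_1L$.

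The principal difficulty is the final step: extracting the simplex conclusion from this ``homothetic-cross-section'' rigidity, analogously to the equality case of Rogers-Shephard's bound $|K-K|\leq\binom{2n}{n}|K|$. As $x_2$ traverses $\partial(K+L)$, the single-point touching condition $K\cap(x_2-L)=\{k(x_2)\}$ must hold in every boundary direction; this restricts both $K$ and $-L$ to have only pointwise (vertex-to-facet) contact in every support direction, which I expect to translate via a facet-counting argument into each having exactly $n+1$ facets, i.e.\ being an $n$-simplex. Finally, the compatibility of the two simplex fans forced by the linear homothety factors of the cross-sections across all $\theta$ should impose parallelism of the facet normals of $K$ and $-L$, yielding that they are homothetic.
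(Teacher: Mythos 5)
Your $(i)\Leftrightarrow(ii)$ argument is correct and matches the paper. Your $(ii)\Rightarrow(iii)$ argument starts on the same track as the paper and is in one respect more careful: you correctly observe, and use, that for $x_2$ on the decomposition the saturation of Brunn--Minkowski together with $|K\cap(x_2-L)|=0$ forces $K\cap(x_2-L)$ to be a single point, whence $K\cap(x-L)$ is a genuine $\theta^{1/n}$-scaled translate of $K\cap(x_1-L)$ and hence (via Corollary~\ref{convolution1:equality}) homothetic to $K\cap(-L)$. Up to ``every cross-section $K\cap(x-L)$ is a homothetic copy of $K\cap(-L)$'' your reasoning is sound.

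The genuine gap is in the final step of $(ii)\Rightarrow(iii)$. Having arrived at the homothetic-cross-section property, you attempt to extract the simplex conclusion by a facet-counting argument and an appeal to pointwise contact, but you do not actually produce an argument (``I expect\dots'', ``should impose\dots''). This last implication is a substantive rigidity statement, not a routine counting exercise, and the paper resolves it by invoking a theorem of Soltan (Theorem~\ref{soltanth}): if all $n$-dimensional intersections $K\cap(z-L)$, $z\in\Ren$, are homothetic to $K\cap(-L)$, then $K$ and $-L$ are homothetic $n$-simplices. That theorem is precisely the missing ingredient; without it (or a self-contained proof of its content) your derivation of $(iii)$ does not close. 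I would also flag that your $(iii)\Rightarrow(ii)$ sketch rests on the unverified assertion that $f(x)=|K\cap(x-L)|^{1/n}$ is affine along every segment joining the plateau $K+_1L$ to $\partial(K+L)$ and that this alone yields the Minkowski-interpolation identity for the superlevel sets; the paper instead computes $K+_\theta L$ explicitly for the simplex pair and then verifies the nontrivial inclusion $K+_\theta L\subseteq\theta^{1/n}(K+_1L)+(1-\theta^{1/n})(K+L)$ by exhibiting concrete decompositions case by case. Your heuristic is plausible, but as written it skips exactly the case analysis that makes that inclusion true.
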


For the proof, we will use the following result from \cite{soltan} (we  state it here in the form it will be used in this paper).

\begin{thm}\label{soltanth} (Soltan)
Let $K, L\subset\Re^n$ be convex bodies. The following conditions are
equivalent:
\begin{enumerate}[(i)]
\item $K$ and $L$ are homothetic $n$-simplices.
\item The $n$-dimensional intersections $K \cap(z - L)$, $z \in\Ren$,
are all homothetic to $K\cap(-L)$.
%, belong to a unique homothety class of convex bodies.
\end{enumerate}
\end{thm}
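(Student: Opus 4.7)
The plan is to prove (i) $\Leftrightarrow$ (ii) first, and then (ii) $\Leftrightarrow$ (iii), with Soltan's Theorem \ref{soltanth} bridging the analytic identity in (ii) and the simplex characterization in (iii).

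The pair (i) $\Leftrightarrow$ (ii) is routine. For (i) $\Rightarrow$ (ii), specialize (i) to $\theta_1=1$, $\theta_2=0$, $\lambda_1=\theta^{1/n}$, $\lambda_2=1-\theta^{1/n}$; the constraint $\lambda_1\theta_1^{1/n}+\lambda_2\theta_2^{1/n}=\theta^{1/n}$ is automatic. For (ii) $\Rightarrow$ (i), apply (ii) to $\theta_1$ and $\theta_2$ separately, form the $\lambda_1$-$\lambda_2$ Minkowski combination (using $\lambda_1+\lambda_2=1$), and collect the coefficients of $K+_1L$ and $K+L$ into $\theta^{1/n}$ and $1-\theta^{1/n}$ respectively; invoking (ii) in reverse for $\theta$ identifies this combination with $K+_\theta L$.

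For (ii) $\Rightarrow$ (iii), fix any $z\in K+L$ with $|K\cap(z-L)|>0$, set $\theta=|K\cap(z-L)|/M(K,L)$, so that $z\in\partial(K+_\theta L)$ by Lemma \ref{boundary}. By (ii), decompose $z=\theta^{1/n}u+(1-\theta^{1/n})v$ with $u\in K+_1L$ and $v\in K+L$, and substitute into the pointwise set-inclusion (\ref{inclusion0:convexity}) (the $K\cap(-L)$ term drops because $\lambda_1+\lambda_2=1$). Taking $n$-th roots of volumes via Brunn--Minkowski, and using $|K\cap(z-L)|=\theta M(K,L)$ together with $|K\cap(u-L)|=M(K,L)$, forces $|K\cap(v-L)|=0$. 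Writing $A=K\cap(u-L)$ and $B=K\cap(v-L)$, the pinched inequality collapses to $|\theta^{1/n}A+(1-\theta^{1/n})B|=|\theta^{1/n}A|$, which forces $B$ to be a single point (a segment joining two distinct points of $B$ would strictly enlarge the $n$-dimensional Minkowski sum). Consequently $K\cap(z-L)$ is a translate of $\theta^{1/n}A$ and, by Corollary \ref{convolution1:equality}, a translate of $\theta^{1/n}[K\cap(-L)]$, so it is homothetic to $K\cap(-L)$. Since this holds for every $n$-dimensional intersection $K\cap(z-L)$, Soltan's Theorem \ref{soltanth} yields (iii).

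For (iii) $\Rightarrow$ (ii), reduce by (b2), (b3), (b4) to the case where $K$ is a fixed simplex and $L=-\mu K$ for some $\mu\in(0,1]$. A short argument gives $K+_1L=\{x : x+\mu K\subseteq K\}=(1-\mu)K$ and $K+L=K-\mu K$. Soltan's theorem writes $K\cap(x+\mu K)=b(x)+\kappa(x)\mu K$, and a half-space computation analogous to the one-dimensional case yields $\{x : \kappa(x)\geq c\}=(1-\mu c)K-\mu(1-c)K=c(1-\mu)K+(1-c)(K-\mu K)$ for all $c\in[0,1]$. Setting $c=\theta^{1/n}$ produces (ii). The main obstacle is the degenerate Brunn--Minkowski step in (ii) $\Rightarrow$ (iii), namely passing from $|K\cap(v-L)|=0$ to $K\cap(v-L)$ being a single point, which rests on the basic fact that $|A+[b_1,b_2]|>|A|$ for any $n$-dimensional convex body $A$ and distinct $b_1,b_2$.
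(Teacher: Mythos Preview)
Your proposal does not address the stated theorem. Theorem~\ref{soltanth} is Soltan's result characterizing pairs of homothetic simplices via the homothety of all $n$-dimensional intersections $K\cap(z-L)$; it has only the two conditions (i) and (ii), and the paper does not prove it at all --- it is quoted from \cite{soltan} as an external tool. What you have written is instead a proof sketch for Proposition~\ref{equalityconvexity1} (the equality-case characterization for convolution bodies, with three conditions (i), (ii), (iii)), and your argument explicitly \emph{invokes} Theorem~\ref{soltanth} as a black box in the step ``Soltan's Theorem~\ref{soltanth} yields (iii)''. An argument that assumes the theorem cannot be a proof of that theorem.

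If your actual target was Proposition~\ref{equalityconvexity1}, then your outline is close to the paper's. Both obtain (i)$\Leftrightarrow$(ii) by specialization and recombination, both derive (iii) from (ii) by forcing equality in the Brunn--Minkowski step behind (\ref{inclusion0:convexity})--(\ref{BM:convexity}) and then appealing to Soltan, and both verify (iii)$\Rightarrow$(ii) by an explicit simplex computation. The visible difference is in the (ii)$\Rightarrow$(iii) step: the paper writes $x=\theta^{1/n}x_1+(1-\theta^{1/n})x_2$ and invokes the equality case of Brunn--Minkowski from \cite{Br} to conclude directly that $K\cap(x-L)$, $K\cap(x_1-L)$, $K\cap(x_2-L)$ are homothetic, whereas you first force $|K\cap(v-L)|=0$ and then argue by hand that a zero-volume convex summand in an $n$-dimensional Minkowski sum of matching volume must be a single point. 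Your route is arguably cleaner on this degenerate case, but note that it still needs the case $z\in K+_1L$ (i.e.\ $\theta=1$) treated separately via Corollary~\ref{convolution1:equality}, since Lemma~\ref{boundary} only places $z$ on $\partial(K+_\theta L)$ for $\theta<1$. Your (iii)$\Rightarrow$(ii) sketch via the level sets $\{x:\kappa(x)\ge c\}$ is considerably more compressed than the paper's explicit coordinate computation and would need to be fleshed out to stand on its own.
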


\begin{proof}[Proof of Proposition \ref{equalityconvexity1}] Using a translation we may assume that (\ref{Mzero}) is satisfied.

Conditions (i) and (ii) are equivalent. Indeed, (i) trivially implies (ii).

Suppose (ii) holds. Then, using the equality for $\theta_1$, $\theta_2$ and $\theta$ successively, and the convexity of the convolution sets,
\begin{align*}
\lambda_1 &(K+_{\theta_1}L)+ \lambda_2(K+_{\theta_2}L)
\\
&=\lambda_1
\left({\theta_1}^\frac{1}{n}(K+_1 L)+
({1-{\theta_1}^\frac{1}{n}})(K+L)\right)
\\
&\hskip1cm+
\lambda_2
\left({\theta_2}^\frac{1}{n}(K+_1 L)+
({1-{\theta_2}^\frac{1}{n}})(K+L)\right)
\\
&=
(\lambda_1\theta_1^\frac{1}{n}+\lambda_2\theta_2^\frac{1}{n})
(K+_1L)
+
(\lambda_1(1-\theta_1^\frac{1}{n})+\lambda_2(1-\theta_2^\frac{1}{n}))
(K+L)
\\
&=K+_\theta L,
\end{align*}
and (i) follows.

Suppose (ii) holds, and take $x\in K+L$, $x\not\in\partial(K+L)$. If $x\in K+_1 L$, Corollary \ref{convolution1:equality} shows that $K\cap (x-L)$ is  a translation of (so homothetic to) $K\cap(-L)$.

If $x\not\in K+_1 L$, by Lemma \ref{boundary}, there is a $\theta\in(0,1)$ such that $x\in\partial(K+_\theta L)$. Using (ii), $x={\theta}^\frac{1}{n}x_1+
({1-{\theta}^\frac{1}{n}})x_2$, for some $x_1\in K+_1 L$, $x_2\in K+L$.

Now, inclusion (\ref{inclusion0:convexity}) and inequality (\ref{BM:convexity}) are both equalities (since $|K\cap(x-L)|=\theta M(K,L)$), and then $K\cap(x-L)$, $K\cap(x_1-L)$ and $K\cap(x_2-L)$ are all homothetic (see equality cases in Brunn-Minkowski inequality in \cite{Br}). Since $x_1\in K+_1 L$, they are all homothetic to $K\cap(-L)$.

Then, all the $n$-dimensional intersections $K\cap(x-L)$ are homothetic to the same body $K\cap(-L)$.
Now it follows from Theorem \ref{soltanth} that $K$, $-L$ and $K\cap(-L)$ are homothetic simplices.

Now suppose condition (iii) holds. After an affine transformation we may assume that
$$
K=\left\{t\in\Ren : t_i\ge0,\,\sum_{i=1}^n t_i\le1\right\}
$$
and $-L=\lambda K$ with $0<\lambda\le1$.
For any $x\in\Ren $,
$$
K\cap(x-L)=\left\{t\in\Ren : t_j\ge x_j^+,\,\sum_{i=1}^n t_i\le\min\{1,\lambda+\sum_{i=1}^n x_i\}\right\},
$$
where $r^+=\max\{r,0\}$. Then, $K\cap(x-L)=z(x)+\lambda(x)K$, where
$
z_j(x)=x_j^+,$ and
$$
\lambda(x)=\min\{1,\lambda+\sum_{i=1}^n x_i\}-\sum_{i=1}^n x_i^+
$$
whenever $\lambda(x)\ge0$. For those $x\in\Ren$, we have $|K\cap(x-L)|=\lambda(x)^n|K|$.
It is easy to see that $\lambda(x)\le \lambda$, and equality holds if and only if $x_j\ge0$ for all $j$ and $\lambda+\sum_{i=1}^n x_i\le1$. Then $M(K,L)=\lambda^n|K|$, and $K+_1L=(1-\lambda)K$.

Using absolute values, $\lambda(x)$ can be rewritten as
$$
\lambda(x)=
\frac{1}{2}\left(1+\lambda-\left|1-\lambda-\sum_{i=1}^n x_i\right|-\sum_{i=1}^n|x_i|\right).
$$
Then
$$
K+_\theta L
=
\left\{
x\in\Ren:
\left|1-\lambda-\sum_{i=1}^n x_i\right|+\sum_{i=1}^n|x_i|\le 1+\lambda(1-2\theta^{\frac{1}{n}})
\right\}.
$$
In particular, letting $\theta=0$, we obtain an expression for $K+L$. In order to prove (ii), it is enough to prove the inclusion $\displaystyle {K+_{\theta} L} \subseteq {\theta}^\frac{1}{n}(K+_1 L)+ ({1-{\theta}^\frac{1}{n}})(K+L)$
for every $\theta\in(0,1)$. Any $x\in {K+_{\theta} L}$ satisfies
\begin{equation}\label{ineqx}
\left|1-\lambda-\sum_{i=1}^n x_i\right|+\sum_{i=1}^n|x_i|
\le
\theta^{\frac{1}{n}}(1-\lambda)+(1-\theta^{\frac{1}{n}})(1+\lambda).
\end{equation}

Recall that $\displaystyle \left|1-\lambda-\sum_{i=1}^na_i\right|+\sum_{i=1}^n|a_i|=1-\lambda$ for any $a\in K+_1 L$. Consequently, if
\begin{equation}\label{decompx}
x=
{\theta}^\frac{1}{n}a+ ({1-{\theta}^\frac{1}{n}})b,
\end{equation}
with $a\in K+_1L$, then $b\in K+L$  provided that the left hand side in (\ref{ineqx})
$$
\left|
\theta^{\frac{1}{n}}(1-\lambda-\sum_{i=1}^n a_i)+(1-\theta^{\frac{1}{n}})(1-\lambda-\sum_{i=1}^n b_i)\right|
+
\sum_{i=1}^n|{\theta}^\frac{1}{n}a_i+ ({1-{\theta}^\frac{1}{n}})b_i|
$$
equals
$$
\theta^{\frac{1}{n}}
\left|
1-\lambda-\sum_{i=1}^n a_i
\right|
+
(1-\theta^{\frac{1}{n}})
\left|
1-\lambda-\sum_{i=1}^n b_i
\right|
+\theta^\frac{1}{n}
\sum_{i=1}^n|a_i|
+
(1-{\theta}^\frac{1}{n})
\sum_{i=1}^n|b_i|.
$$
Considering equality cases in triangle inequality, this happens provided that $1-\lambda-\sum_{i=1}^n a_i$ has the same sign as $1-\lambda-\sum_{i=1}^n b_i$, and for any $j$, $a_j$ has the same sign as $b_j$ (here $r,s$ have the same sign iff $r\cdot s\ge0$).

 If $\sum_{i=1}^n x_i^+\le1-\lambda$, it is enough to consider  $a\in K+_1 L$ in (\ref{decompx}), so that $a_j=x_j^+$. Then $b_j=({1-{\theta}^\frac{1}{n}})^{-1}x_j$ if $x_j<0$ and $b_j=x_j$ if $x_j\ge0$. In any case, $b_j$ has the same sign as $a_j$, and therefore $b\in K+L$.

If $\sum_{i=1}^n x_i^+>1-\lambda$, it is enough to take $a\in K+_1 L$ in (\ref{decompx}), so that $a_j=(1-\lambda)\frac{x_j^+}{\sum_{i=1}^n x_i^+}$. Then $b\in K+L$ provided that $b_j\ge0$ for those $j$ so that $x_j>0$. But in that case
$$
x_j={\theta}^\frac{1}{n}\frac{(1-\lambda)}{\sum_{i=1}^nx_i^+}x_j
+
({1-{\theta}^\frac{1}{n}})b_j
$$
Then $b_j\ge0$ provided that
$$
{\theta}^\frac{1}{n}\frac{(1-\lambda)}{\sum_{i=1}^n x_i^+}
\le1,
$$
which  is assumed to be true. In any case, we can find a decomposition as (\ref{decompx}), which proves (ii).
\end{proof}

Now we can deduce equality cases in Corollary \ref{increasingintheta}.

\begin{prop}\label{equalityconvexity2}
Let $K,L\subset\Re^n$ be convex bodies satisfying (\ref{Mzero}). The following conditions are equivalent.
\begin{enumerate}[(i)]
\item For any $0\le\theta_0\le\theta\le1$,
$\displaystyle
(1-\theta^\frac{1}{n})(K+_{\theta_0}L)
=
(1-\theta_0^\frac{1}{n})(K+_{\theta}L).
$

\item For every $\theta\in[0,1]$,
$\displaystyle
({1-{\theta}^\frac{1}{n}})(K+L)
=
{K+_{\theta} L}.
$
\item $K=-L$ is an $n$-simplex.
\end{enumerate}

\end{prop}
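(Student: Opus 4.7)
The plan is to establish the cyclic chain (i) $\Rightarrow$ (ii) $\Rightarrow$ (iii) $\Rightarrow$ (i), leveraging Proposition \ref{equalityconvexity1} to handle the main implication.

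First I would dispatch the equivalence (i) $\Leftrightarrow$ (ii), which is elementary. Setting $\theta_0=0$ in (i) immediately yields (ii), since $1-\theta_0^{1/n}=1$ and $K+_0 L = K+L$. Conversely, applying (ii) to both $\theta_0$ and $\theta$ rewrites each side of (i) as $(1-\theta^{1/n})(1-\theta_0^{1/n})(K+L)$, so the equalities coincide.

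Next, for (iii) $\Rightarrow$ (ii), I would simply invoke the explicit simplex calculation already carried out at the end of the proof of Proposition \ref{equalityconvexity1}. With $\lambda=1$ (i.e.\ $K=-L$), the formulas derived there specialize to
$$K+_\theta L = \Bigl\{x\in\Ren : \Bigl|\sum_{i=1}^n x_i\Bigr| + \sum_{i=1}^n |x_i| \le 2(1-\theta^{1/n})\Bigr\},$$
which is precisely $(1-\theta^{1/n})(K+L)$ after taking $\theta=0$ to describe $K+L$. No new computation is required beyond quoting those identities.

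The heart of the argument is (ii) $\Rightarrow$ (iii). Corollary \ref{inclusion:BM} provides
$$\theta^{1/n}(K+_1L) + (1-\theta^{1/n})(K+L) \subseteq K+_\theta L,$$
and hypothesis (ii) identifies the right-hand side with $(1-\theta^{1/n})(K+L)$. Since (\ref{Mzero}) guarantees $0\in K+_1L$, the reverse inclusion is immediate, so equality holds in Corollary \ref{inclusion:BM} for every $\theta$. This is exactly condition (ii) of Proposition \ref{equalityconvexity1}, whose conclusion is that $K$ and $-L$ are homothetic $n$-simplices, say $-L=\lambda K$ with $0<\lambda\le 1$.

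The main obstacle is to upgrade homothety to genuine equality $K=-L$. For this I would apply R\aa dstr\"om cancellation to the Minkowski identity
$$\theta^{1/n}(K+_1L) + (1-\theta^{1/n})(K+L) = (1-\theta^{1/n})(K+L),$$
which forces $K+_1L=\{0\}$. Combining this with the explicit description $K+_1L=(1-\lambda)K$ extracted from the proof of Proposition \ref{equalityconvexity1}, we conclude $\lambda=1$, hence $K=-L$. This residual simplex $K+_1L$ acting as the ``obstruction'' to $\lambda=1$ is the key structural observation tying the chain together.
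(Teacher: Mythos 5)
Your proof is correct and follows essentially the same route as the paper's: reduce (ii)~$\Rightarrow$~(iii) to condition (ii) of Proposition~\ref{equalityconvexity1} to conclude that $K$ and $-L$ are homothetic simplices, then use the identity $K+_1L=(1-\lambda)K$ from that proposition's proof together with $K+_1L=\{0\}$ to force $\lambda=1$. The only cosmetic difference is that the paper extracts $K+_1L=\{0\}$ directly by setting $\theta=1$ in hypothesis~(ii), so the R\aa dstr\"om cancellation step is not needed.
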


\begin{proof}
Again, conditions (i) and (ii) are easily seen to be equivalent.

Now, suppose (ii) holds; then $K+_1L=\{0\}$, and consequently (ii) in Proposition \ref{equalityconvexity1} holds. Then $K$ and $-L$ are homothetic $n$-simplices.

As in the proof of Proposition \ref{equalityconvexity1}, we have $K+_1L=(1-\lambda)K$, which implies $\lambda=1$, since $K+_1L=\{0\}$. Consequently, $K=-L$.

%%Suppose that $K$, say, is not identical to the intersection $K\cap(-L)$. Then $z\not=0$ can be %%chosen so that $K\cap(z-L)\supset K\cap(-L)$, contradicting $K+_1L=\{0\}$.
%%Hence $K=-L$ are the same simplex.

Condition (iii) implies (ii) in Proposition \ref{equalityconvexity1}, and since $K+_1L=\{0\}$, we get (ii).
\end{proof}

\section{Brunn-Minkowski type inequality for $\theta$-convolution bodies}\label{B-M_ineq_for_theta}

From the previous study on convolution of two sets, the following natural question arises: what kind of Brunn-Minkowski-type inequality for $\theta$-convolutions
\begin{equation}\label{eq:BMmod}
|K+_\theta L|^{\frac{1}{n}}\geq\varphi_n(\theta)^{\frac{1}{n}}(|K|^{\frac{1}{n}}+|L|^{\frac{1}{n}}).
\end{equation}
does it hold?

 As in the classical case, the homogeneity allows one to formulate the inequality in different equivalent forms.

\begin{prop}
The following statements are all equivalent:
\begin{enumerate}[(i)]
 \item For $K,L$ measurable sets in $\Re^n$
 $$
  |K+_\theta L|^{\frac{1}{n}}\geq\varphi_n(\theta)^{\frac{1}{n}}(|K|^{\frac{1}{n}}+|L|^{\frac{1}{n}}).
 $$
 \item For $K,L$ measurable sets in $\Re^n$ and $0<\lambda<1$
$$
|\lambda K+_\theta(1-\lambda) L|^{\frac{1}{n}}\geq\varphi_n(\theta)^{\frac{1}{n}}(\lambda|K|^{\frac{1}{n}}+(1-\lambda)|L|^{\frac{1}{n}}).
$$
 \item For $K,L$ measurable sets in $\Re^n$ and $0<\lambda<1$
 $$
 |\lambda K+_\theta(1-\lambda) L|\geq\varphi_n(\theta)(|K|^{\lambda}\cdot|L|^{1-\lambda}).
 $$
 \item For $K,L$ measurable sets in $\Re^n$ and $0<\lambda<1$
 $$
 |\lambda K+_\theta(1-\lambda) L|\geq\varphi_n(\theta)\min\{|K|,|L|\}.
 $$
 \item For $K,L$ measurable sets in $\Re^n$ such that $|K|=|L|=1$ and $0<\lambda<1$ $$|\lambda K+_\theta(1-\lambda) L|\geq \varphi_n(\theta).$$
\end{enumerate}
\end{prop}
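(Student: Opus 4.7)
The plan is to close the chain of implications (i) $\Rightarrow$ (ii) $\Rightarrow$ (iii) $\Rightarrow$ (iv) $\Rightarrow$ (v) $\Rightarrow$ (i). Most steps are routine: the only substantive point is the reduction (v) $\Rightarrow$ (i), which requires exploiting the homogeneity property (b1) of the $\theta$-convolution.

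First, (i) $\Rightarrow$ (ii) follows immediately by applying (i) to the dilates $\lambda K$ and $(1-\lambda)L$ and using $|\lambda K|^{1/n}=\lambda|K|^{1/n}$. For (ii) $\Rightarrow$ (iii), I would invoke the weighted AM-GM inequality, which gives $\lambda|K|^{1/n}+(1-\lambda)|L|^{1/n}\ge|K|^{\lambda/n}|L|^{(1-\lambda)/n}$; raising both sides to the $n$-th power matches the form in (iii). Next, (iii) $\Rightarrow$ (iv) is just the trivial bound $|K|^{\lambda}|L|^{1-\lambda}\ge\min\{|K|,|L|\}$, and (iv) $\Rightarrow$ (v) is immediate since $\min\{|K|,|L|\}=1$ when $|K|=|L|=1$.

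The key step is (v) $\Rightarrow$ (i). Given arbitrary measurable $K,L$ of positive volume, set
$$
K'=\frac{K}{|K|^{1/n}},\qquad L'=\frac{L}{|L|^{1/n}},\qquad \lambda=\frac{|K|^{1/n}}{|K|^{1/n}+|L|^{1/n}},
$$
so that $|K'|=|L'|=1$ and $\lambda K'=\mu K$, $(1-\lambda)L'=\mu L$ with $\mu=(|K|^{1/n}+|L|^{1/n})^{-1}$. Property (b1) of Proposition \ref{prop:properties} gives
$$
\lambda K'+_{\theta}(1-\lambda)L'=\mu K+_{\theta}\mu L=\mu\,(K+_{\theta}L).
$$
Applying (v) to $K',L'$ with this choice of $\lambda$ and taking the $n$-th root yields
$$
\frac{|K+_{\theta}L|^{1/n}}{|K|^{1/n}+|L|^{1/n}}=|\lambda K'+_{\theta}(1-\lambda)L'|^{1/n}\ge\varphi_n(\theta)^{1/n},
$$
which is exactly (i). The case where $|K|$ or $|L|$ vanishes is handled by a routine approximation (or by noting that both sides of (i) behave continuously under replacing $K$ by $K\cup\{\epsilon$-ball$\}$).

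The main obstacle is making sure that the scaling identity (b1), stated in Proposition \ref{prop:properties} for a single scalar, applies in the mixed form $\lambda K'+_\theta(1-\lambda)L'=\mu(K+_\theta L)$; this is why the normalizing factors are chosen so that both sets are scaled by the \emph{same} $\mu$. No convexity is needed anywhere, so the equivalences hold for arbitrary measurable sets, as stated.
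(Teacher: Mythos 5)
Your proof is correct and follows essentially the same cyclic chain $(i)\Rightarrow(ii)\Rightarrow(iii)\Rightarrow(iv)\Rightarrow(v)\Rightarrow(i)$ as the paper, with the same key step $(v)\Rightarrow(i)$ via normalization and the homogeneity property (b1); your use of weighted AM--GM for $(ii)\Rightarrow(iii)$ is simply the paper's ``take logarithm and use concavity'' restated. The only addition you make is explicitly flagging why both normalized sets must be scaled by the same factor $\mu$ for (b1) to apply, which is exactly the point the paper's choice of $\lambda$ is engineered around.
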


\begin{proof}
$(i)\rightarrow(ii)$ and $(iii)\rightarrow(iv)\rightarrow(v)$ are immediate. The proof of $(ii)\rightarrow(iii)$ is obtained by taking logarithm and using its concavity.

Finally, apply ({\it{v}}) with $\overline{K}=|K|^{-\frac{1}{n}}K$, $\overline{L}=|L|^{-\frac{1}{n}}L$ and $\lambda=\frac{s|K|^{\frac{1}{n}}}{s|K|^{\frac{1}{n}}+t|L|^{\frac{1}{n}}}$,
and use the homogeneity of the convolution (Proposition \ref{prop:properties} (b1)) to get (\it{i}).
\end{proof}
A first inequality in this direction for convex bodies is obtained from Corollary \ref{inclusion:BM}.

\begin{cor}\label{BMtheta}
Let $K,\ L\subset\Re^n$ be convex bodies. Then
\begin{equation*}
|K+_\theta L|^{\frac{1}{n}}\geq (1-\theta^{\frac{1}{n}})(|K|^{\frac{1}{n}}+|L|^{\frac{1}{n}}).
\end{equation*}
Equivalently, $\varphi_n(\theta)\geq (1-\theta^{\frac{1}{n}})^n$ in (\ref{eq:BMmod}).
\end{cor}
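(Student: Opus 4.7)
The plan is to read off the desired Brunn-Minkowski-type inequality directly from the inclusion given by Corollary \ref{inclusion:BM} by applying the classical Brunn-Minkowski inequality twice.

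First, since the quantities $|K+_\theta L|$, $|K|$, $|L|$ are all translation invariant, I may assume without loss of generality that $M(K,L) = |K\cap(-L)|$, so in particular $0 \in K+_1 L$ and Corollary \ref{inclusion:BM} applies in the form
$$\theta^{\frac{1}{n}}(K+_1 L) + (1-\theta^{\frac{1}{n}})(K+L) \subseteq K+_\theta L.$$
Taking $n$-dimensional volumes of both sides and applying the classical Brunn-Minkowski inequality to the Minkowski sum on the left, I obtain
$$|K+_\theta L|^{\frac{1}{n}} \geq \theta^{\frac{1}{n}}|K+_1 L|^{\frac{1}{n}} + (1-\theta^{\frac{1}{n}})|K+L|^{\frac{1}{n}}.$$

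Next, I simply discard the first (nonnegative) term on the right, keeping only
$$|K+_\theta L|^{\frac{1}{n}} \geq (1-\theta^{\frac{1}{n}})|K+L|^{\frac{1}{n}},$$
and then invoke the classical Brunn-Minkowski inequality once more on $|K+L|^{\frac{1}{n}} \geq |K|^{\frac{1}{n}} + |L|^{\frac{1}{n}}$. Combining these two estimates yields
$$|K+_\theta L|^{\frac{1}{n}} \geq (1-\theta^{\frac{1}{n}})\bigl(|K|^{\frac{1}{n}} + |L|^{\frac{1}{n}}\bigr),$$
which is exactly the claim, so that $\varphi_n(\theta) \geq (1-\theta^{1/n})^n$ in (\ref{eq:BMmod}).

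There is essentially no obstacle here; the only small subtlety worth noting is the translation normalization so that $K+_1 L \neq \emptyset$, ensuring Corollary \ref{inclusion:BM} is nontrivially applicable, and the fact that throwing away the $\theta^{1/n}|K+_1 L|^{1/n}$ term explains why the resulting bound $(1-\theta^{1/n})^n$ is not expected to be sharp in general (as the authors themselves remark at the beginning of Section \ref{propertiesandresults}).
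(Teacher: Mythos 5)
Your proof is correct and follows essentially the same route as the paper's: take volumes in the inclusion from Corollary~\ref{inclusion:BM}, drop the $K+_1L$ contribution, and finish with the classical Brunn-Minkowski inequality on $K+L$. The only cosmetic difference is that you make the intermediate Brunn-Minkowski application (yielding $|K+_\theta L|^{1/n}\ge\theta^{1/n}|K+_1L|^{1/n}+(1-\theta^{1/n})|K+L|^{1/n}$) explicit before discarding the first term, whereas the paper condenses this to ``taking volumes,'' implicitly using that $0\in K+_1L$ under the normalization (\ref{Mzero}).
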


\begin{proof}
Taking volumes in (\ref{conttheta})
\begin{equation}\label{cor:voltheta}
|K+_\theta L|^{\frac{1}{n}}\geq (1-\theta^{\frac{1}{n}})|K+L|^{\frac{1}{n}}
\end{equation}
and applying Brunn-Minkowski inequality
\begin{equation}\label{cor:teomasBM}
(1-\theta^{\frac{1}{n}})|K+L|^{\frac{1}{n}}\geq(1-\theta^{\frac{1}{n}})(|K|^{\frac{1}{n}}+|L|^{\frac{1}{n}})
\end{equation}
we obtain the desired result.
\end{proof}

In order to have equality in Corollary \ref{BMtheta}, we need to have equality in  (\ref{cor:voltheta}) and in Brunn-Minkowski inequality (\ref{cor:teomasBM}). However, by Proposition \ref{equalityconvexity2}, equality in (\ref{cor:voltheta}) holds if and only if  $K=-L$ is an $n$-dimensional simplex, and in that case there is not equality in Brunn-Minkowski inequality (unless $n=1$).
See examples at the end of the section for details.

%%However, this first Brunn-Minkowski-type inequality is not sharp. Indeed, if equality holds, then  (\ref{cor:voltheta}) and (\ref{cor:teomasBM}) also hold with equality. By Proposition \ref , $K=-L$ is a simplex. On the other hand, equality in Brunn-Minkowski inequality (\ref{cor:teomasBM}) implies that $K$ and $L$ are homothetic  (see \cite{Br}), a contradiction unless $n=1$. See examples at the end of the section for details.

The following result improves the inclusion
\begin{equation}\label{maincontent}
(1-\theta^\frac{1}{n})(K+L)\subseteq K+_\theta L
\end{equation} providing a new set between them. A good estimate for the volume of this new set would lead to a better estimate for $|K+_\theta L|$.

\begin{thm}\label{intermediateset}
Let $K,L\subset\Re^n$ be convex bodies such that $\displaystyle{M(K,L)=|K\cap(-L)|}$. Then for all $\theta\in[0,1]$,
\begin{align*}
% \nonumber to remove numbering (before each equation)
  K+_\theta L &  \supseteq  \left\{a+b\ :\ a\in K,\ b\in L,\ \frac{|(1-||a||_K)K\cap(1-||b||_{L})(-L)|}{|K\cap(-L)|}\geq \theta \right\}&\\
   &\supseteq  (1-\theta^{\frac{1}{n}})(K+L).&
\end{align*}
\end{thm}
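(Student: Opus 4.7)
The plan is to prove the two inclusions separately; the first carries all the geometric content, and the second is a straightforward star-shapedness comparison. Throughout I assume $0\in\inter(K)\cap\inter(L)$, so that the Minkowski functionals $\normk{\cdot}$ and $\norml{\cdot}$ are defined and $0\in K\cap(-L)$, compatibly with the hypothesis $M(K,L)=|K\cap(-L)|$.

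For the inner inclusion, I would fix $a\in K$ and $b\in L$ satisfying the ratio condition and establish the pointwise containment
$$a+\bigl[(1-\normk{a})K\cap(1-\norml{b})(-L)\bigr]\subseteq K\cap(a+b-L),$$
from which $|K\cap(a+b-L)|\geq\theta M(K,L)$ follows by translation invariance of Lebesgue measure, placing $a+b$ in $K+_\theta L$. The key auxiliary inclusions are $a+(1-\normk{a})K\subseteq K$ and $-b+(1-\norml{b})(-L)\subseteq -L$; each is proved by writing $a=\normk{a}(a/\normk{a})+(1-\normk{a})\cdot 0$ with $a/\normk{a}\in\partial K\subseteq K$ (and analogously for $b$), so that $a+(1-\normk{a})x=\normk{a}(a/\normk{a})+(1-\normk{a})x$ is a convex combination of two points of $K$. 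Applying both auxiliary inclusions to any $y$ in the intersection on the left yields $a+y\in K$ and $(a+y)-(a+b)=y-b\in -L$, which is precisely $a+y\in K\cap(a+b-L)$.

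For the outer inclusion, any $c\in(1-\theta^{1/n})(K+L)$ splits as $c=a+b$ with $a\in(1-\theta^{1/n})K$ and $b\in(1-\theta^{1/n})L$, giving $\normk{a},\norml{b}\leq 1-\theta^{1/n}$ and therefore $1-\normk{a},1-\norml{b}\geq\theta^{1/n}$. Since $K$ and $-L$ are star-shaped at the origin, scaling down preserves inclusion, so
$$(1-\normk{a})K\cap(1-\norml{b})(-L)\supseteq\theta^{1/n}K\cap\theta^{1/n}(-L)=\theta^{1/n}\bigl(K\cap(-L)\bigr),$$
whose volume is $\theta\,|K\cap(-L)|$; hence the ratio is at least $\theta$ and $c$ lies in the middle set. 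The main obstacle I expect is treating the degenerate boundary values $\normk{a}\in\{0,1\}$ or $\norml{b}\in\{0,1\}$ uniformly in the convex-combination step, where $a/\normk{a}$ is either undefined or meets $\partial K$; both limit cases are actually softer than the generic one, so the difficulty is bookkeeping rather than substance.
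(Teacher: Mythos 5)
Your proposal is correct and follows essentially the same route as the paper's own proof: the same two auxiliary convex-combination inclusions $a+(1-\normk{a})K\subseteq K$ and $-b+(1-\norml{b})(-L)\subseteq -L$, leading to $a+\bigl[(1-\normk{a})K\cap(1-\norml{b})(-L)\bigr]\subseteq K\cap(a+b-L)$, and then the trivial nesting to obtain $(1-\theta^{1/n})(K+L)$. The only difference is that you spell out the second inclusion and the degenerate values of $\normk{a},\norml{b}$ more explicitly than the paper, which dispatches those in a single line.
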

\begin{proof}
Let $x\in K+L$, then $x=a+b$ with $a\in K$ and $b\in L$. From the convexity of $K$
$$
(1-||a||_K)K+||a||_K\frac{a}{||a||_K}\subseteq K.
$$
Also, since $||-b||_{-L}=||b||_L$ and $L$ is convex
$$
(1-||b||_L)(-L)+||b||_L\frac{-b}{||b||_L}+x\subseteq x-L.
$$
Since $x-b=a$, we have $(1-||a||_K)K+a\subseteq K \text{ and }(1-||b||_L)(-L)+a\subseteq x-L$.
Thus,
$$
a+(1-||a||_K)K \cap(1-||b||_L)(-L)\subseteq K\cap (x-L)
$$
and then
$|K\cap (x-L)|\geq |(1-||a||_K)K \cap(1-||b||_L)(-L)|.$
Consequently,
$$
K+_\theta L   \supseteq  \{a+b\ :\ a\in K,\ b\in L,\ \frac{|(1-||a||_K)K\cap(1-||b||_{L})(-L)|}{|K\cap(-L)|}\geq \theta \}.
$$
This set trivially contains the set
$$
\{a+b\ :\ \inf\{ (1-||a||_K)^n,(1-||b||_L)^n \}\geq \theta\}=(1-\theta^{\frac{1}{n}})(K+L).\vspace{-0.9cm}$$\end{proof}

In order to get a more accurate idea of how good the bound in Corollary \ref{cor:voltheta} is, we estimate the quotient $\displaystyle{\frac{|K+_\theta L|^{\frac{1}{n}}}{|K|^{\frac{1}{n}}+|L|^{\frac{1}{n}}}}$ for some particular pairs of bodies.

\noindent\textbf{Examples:}
\begin{itemize}
\item[1)] For $K,\ L$ cubes whose sides are parallel to the coordinate hyperplanes, it is not hard to see that the quotient is minimized when $K=L=[-1/2,1/2]^n$, and its value equals
    $$\left[1-\theta\sum_{k=0}^{n-1}\frac{(-\log \theta)^k}{k!}\right]^{\frac{1}{n}}.$$
\item[2)] For $K=L$ the unit Euclidean ball, the quotient equals $R_n(\theta)$ given by the equality
    $$2\omega_{n-1}\int_{R_n(\theta)}^1\left(1-s^2\right)^{\frac{n-1}{2}}ds=\theta\omega_n$$
    where $\omega_n$ denotes the volume of the $n$-dimensional unit Euclidean ball.
\item[3)] As it was mentioned above, in \cite{R-S1} it was proved that, for $K=L$ the simplex, the quotient equals
    $$(1-\theta^{\frac{1}{n}})\frac{{2n\choose n}^{\frac{1}{n}}}{2}\sim 2(1-\theta^{\frac{1}{n}}).$$
\end{itemize}
Comparing these three cases, it seems that the minimum value for the quotient is attained in a different case depending on $\theta$. This fact makes difficult to find a family of bodies in which the minimum is attained.

\section{A connection with projection bodies and Zhang inequality}\label{polarprojectionbodies}

This section is devoted to generalize the inclusions (\ref{Sc}) and Zhang inequality for convolution of different convex bodies.

The following result generalizes the right hand side inclusion in (\ref{Sc}). We extend the ideas used in \cite{Sch1}
\begin{prop}\label{Schmuckeslagerseveralbodies}
Let $K,\ L\subset \Re^n$ be convex bodies satisfying (\ref{Mzero}). Then, for every $\theta\in(0,1)$
$$
K+_\theta L\subseteq\left\{x\in\Re^n\,:\,|x|\left|\frac{d^+}{dt}\left|K\cap \left(t\frac{x}{|x|}-L\right)\right|_{t=0}\right|\leq n(1-\theta^\frac{1}{n})M(K,L)\right\}.
$$
\end{prop}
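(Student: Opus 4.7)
The strategy is to reduce the claim to a one-variable statement about the function $f(t) = |K \cap (tu - L)|^{1/n}$, where $u = x/|x|$ is the direction of $x$, and then exploit two facts: $f$ is concave on its support and it attains its global maximum at $t=0$.

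Setting up: for $x \neq 0$, write $u = x/|x|$ and $r = |x|$. As already used implicitly in the proof of Lemma \ref{boundary}, the function $t \mapsto f(t) = |K \cap (tu - L)|^{1/n}$ is concave on the segment $\{t : tu \in K+L\}$; this is a direct consequence of Brunn's concavity principle applied to the $(2n)$-dimensional convex body $\{(y,z) \in K \times \Re^n : z - y \in -L\}$, whose fibers in the $z$-direction are the translates $K \cap (z-L)$. By the normalization (\ref{Mzero}), $f(0)^n = |K \cap (-L)| = M(K,L)$, and since $M(K,L)$ is by definition the supremum of $|K \cap (y-L)|$ over $y \in \Re^n$, the function $f$ attains its maximum at $t=0$. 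Hence $0$ lies in the interior of the support of $f$, the right derivative $f'(0^+)$ exists and is finite and non-positive, and the standard concave upper bound $f(r) \leq f(0) + r f'(0^+)$ rearranges to
$$r\,|f'(0^+)| \leq f(0) - f(r) = M(K,L)^{1/n} - f(r).$$

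Next I feed in the defining condition $x \in K +_\theta L$, which says $f(r)^n = |K \cap (x-L)| \geq \theta M(K,L)$, so $f(r) \geq \theta^{1/n} M(K,L)^{1/n}$. Plugging into the previous display gives
$$r\,|f'(0^+)| \leq (1-\theta^{1/n})\,M(K,L)^{1/n}.$$
Finally I translate this bound on $f'(0^+)$ into one on the right derivative of the volume $g(t) = f(t)^n = |K \cap (tu - L)|$. Since $f(0) = M(K,L)^{1/n} > 0$ and $f$ has a right derivative at $0$, the chain rule yields $g'(0^+) = n f(0)^{n-1} f'(0^+) = n\,M(K,L)^{(n-1)/n}\,f'(0^+)$. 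Multiplying the last displayed inequality by $n\,M(K,L)^{(n-1)/n}$ and using $|x| = r$ produces
$$|x|\,\left|\frac{d^+}{dt}\left|K \cap \left(t\tfrac{x}{|x|} - L\right)\right|_{t=0}\right| \leq n(1-\theta^{1/n})\,M(K,L),$$
as asserted (the case $x=0$ being trivial).

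The plan involves no serious obstacle: all the geometric content is packaged into the concavity of $f$ and the identification of $0$ as its maximizer, both of which follow from Brunn--Minkowski and the hypothesis (\ref{Mzero}). The remaining steps are a short chain of elementary inequalities comparing the value of a concave function at the maximum to its value at a point on the ray, combined with a chain-rule conversion between $|K \cap (\cdot - L)|$ and its $n$-th root. The mild technical point worth keeping in mind is checking that $0$ is genuinely an interior point of the domain of $f$, so that $f'(0^+)$ is finite; this follows because $|K \cap (-L)| > 0$ forces $0 \in \inter(K+L)$.
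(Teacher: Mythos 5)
Your proof is correct, and it relies on the same core fact as the paper's (the concavity of $t\mapsto f(t)=|K\cap(tu-L)|^{1/n}$ along each ray together with $f$ attaining its maximum at $0$), but the technical execution is noticeably more direct. The paper lower-bounds $|K\cap(\lambda x - L)|$ by applying Bernoulli's inequality to the $n$-th power of the concavity estimate, upper-bounds the same quantity by writing $|K\cap(\lambda x - L)|-M(K,L)$ as an integral of the right derivative and crudely bounding the integrand, then divides by $\lambda$ and lets $\lambda\to0^+$; the existence and meaning of $\frac{d^+}{dt}|K\cap(tu-L)|\big|_{t=0}$ is extracted through that limiting process. You instead invoke the tangent-line bound $f(r)\le f(0)+rf'(0^+)$ for a concave function at an interior point of its domain, which immediately gives $r|f'(0^+)|\le f(0)-f(r)\le(1-\theta^{1/n})M(K,L)^{1/n}$, and you then convert to the derivative of the volume $g=f^n$ by the one-sided chain rule $g'(0^+)=nf(0)^{n-1}f'(0^+)$, valid since $f(0)=M(K,L)^{1/n}>0$. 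Both routes reach the same intermediate inequality $nM(K,L)\bigl(f(r)/f(0)-1\bigr)\le r\,g'(0^+)$, but yours dispenses with Bernoulli, the integral representation, and the $\lambda\to0^+$ limit. The one point you rightly flag, that $0\in\inter(K+L)$ so that $f'(0^+)$ is finite, is indeed guaranteed by $|K\cap(-L)|>0$ under the normalization~(\ref{Mzero}); the paper's limiting argument implicitly relies on the same fact.
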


\begin{proof}
The concavity of the function $x\mapsto|K\cap(x-L)|^\frac{1}{n}$ implies
\begin{eqnarray*}
|K\cap(\lambda x-L)|&\geq&\left((1-\lambda)M(K,L)^\frac{1}{n}+\lambda|K\cap(x-L)|^\frac{1}{n} \right)^n\cr
&=&M(K,L)\left[1+\lambda\left(\frac{|K\cap(x-L)|^\frac{1}{n}}{M(K,L)^\frac{1}{n}}-1 \right) \right]^n\cr
&\geq&M(K,L)\left[1+\lambda n\left(\frac{|K\cap(x-L)|^\frac{1}{n}}{M(K,L)^\frac{1}{n}}-1 \right) \right]
\end{eqnarray*}
for $\lambda\in [0,1]$ and $x\in K+L$. On the other hand,
\begin{eqnarray*}
|K\cap (\lambda x-L)|&=&M(K,L)+\int_0^{\lambda|x|}\frac{d^+}{dt}\left|K\cap \left(t\frac{x}{|x|}-L\right)\right|dt\\
&\leq&M(K,L)+\lambda|x|\max_{t\in[0,\lambda|x|]}\frac{d^+}{dt}\left|K\cap \left(t\frac{x}{|x|}-L\right)\right|
%\\
%&=&M(K,L)+\lambda|x|\frac{d^+}{dt}\left|K\cap \left(t\frac{x}{|x|}-L\right)\right|_{t=0}
\end{eqnarray*}
again using the concavity of $x\mapsto|K\cap(x-L)|^\frac{1}{n}$. Comparing these two inequalities, and letting $\lambda\to0^+$, we obtain
$$
nM(K,L)\left(\frac{|K\cap(x-L)|^\frac{1}{n}}{M(K,L)^\frac{1}{n}}-1 \right)\leq|x|\frac{d^+}{dt}\left|K\cap \left(t\frac{x}{|x|}-L\right)\right|_{t=0}.
$$
Since the lateral derivative is non positive, we get the desired inclusion.
\end{proof}

\begin{rmk}
\item[1)]If $L=-K$, then the right-hand side set is exactly $n(1-\theta^\frac{1}{n})|K|\Pi^*K$ which improves the right hand side inclusion in (\ref{Sc}).
\item[2)]From Corollary \ref{increasingintheta}, the family of sets $\displaystyle \frac{K+_\theta L}{1-\theta^\frac{1}{n}}$ is increasing with respect to $\theta$, and using the equivalence $1-\theta\sim n(1-\theta^\frac{1}{n})$, the existence of the limiting convolution set with $\alpha=1$,
    $$
    C_1(K,L)=\lim_{\theta\to 1^-}\frac{K+_\theta L}{1-\theta}
    $$
    follows.
However, there are cases in which this set is unbounded. We refer to Example 3.15 in \cite{Tso}
for a detailed construction of an example
 where the limiting convolution is $\Ren$. In that paper
sufficient conditions for $C_1(K,L)$ to be bounded are also given.
\item[3)]The righten set in Proposition \ref{Schmuckeslagerseveralbodies} is $n(1-\theta^\frac{1}{n})C_1(K,L)$. The previous result can be deduced from Corollary \ref{increasingintheta} letting $\theta_0\rightarrow 1^{-}$ (see the proof of Theorem \ref{Zhangextension} below).
\item[4)]Nevertheless, a general inclusion $\displaystyle{K+_\theta L\subseteq n(1-\theta^\frac{1}{n})C}$, for some body $C$ independent from $\theta$ can not be proved, since it was shown in \cite{Tso} that the limiting convolution body with $\alpha=1$ could be non compact.
\end{rmk}

The left-hand side inclusion in (\ref{Sc}) is generalized with the following Proposition. Recall that $\conv(A)$ denotes the convex hull of a set $A$.

\begin{prop}\label{leftinclusion}
Let $K,\ L\subset \Re^n$ be convex bodies satisfying (\ref{Mzero}). Then, for every $\theta\in(0,1)$
$$K+_\theta L\supseteq(1-\theta)M(K,L)\conv(\Pi^*K\cup\Pi^*L).$$
\end{prop}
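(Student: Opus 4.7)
The plan is to establish the two inclusions
$$
(1-\theta)M(K,L)\,\Pi^*K\subseteq K+_\theta L
\quad\text{and}\quad
(1-\theta)M(K,L)\,\Pi^*L\subseteq K+_\theta L,
$$
and then invoke the convexity of $K+_\theta L$ (the corollary to Proposition \ref{incconvexity}) to conclude that the convex hull $(1-\theta)M(K,L)\,\conv(\Pi^*K\cup\Pi^*L)$ is contained in $K+_\theta L$, which is the claim.

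Both inclusions will follow from a single slicing estimate: for every unit vector $v\in\Re^n$ and every $t\ge 0$,
$$
|K\cap(tv-L)|\geq M(K,L)-t\,|P_{v^\bot}(K\cap(-L))|.
$$
To derive this, I would set $I_K(q)=\{s\in\Re:q+sv\in K\}$ and $I_{-L}(q)=\{s\in\Re:q+sv\in -L\}$ for $q\in v^\bot$, check by a direct computation that the slice of $K\cap(tv-L)$ along the line $q+\Re v$ equals $I_K(q)\cap(I_{-L}(q)+t)$, and then integrate over $v^\bot$. The key point is the one-dimensional shift inequality $|J_1\cap(J_2+t)|\ge |J_1\cap J_2|-t$ for intervals, which is immediate from $|J_2\setminus(J_2+t)|\le t$. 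Restricting the outer integration to the support of $q\mapsto|I_K(q)\cap I_{-L}(q)|$, which is precisely $P_{v^\bot}(K\cap(-L))$, and using the hypothesis $|K\cap(-L)|=M(K,L)$, yields the displayed bound.

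To conclude, fix $y\in\Pi^*K$, so that $|y|\,|P_{y^\bot}K|\le 1$, and set $x=(1-\theta)M(K,L)y$, $v=y/|y|$, $t=|x|=(1-\theta)M(K,L)|y|$. Combining the slicing estimate with the trivial projection bound $|P_{v^\bot}(K\cap(-L))|\leq |P_{v^\bot}K|$ gives
$$
|K\cap(x-L)|\geq M(K,L)\bigl(1-(1-\theta)\,|y|\,|P_{v^\bot}K|\bigr)\geq\theta M(K,L),
$$
hence $x\in K+_\theta L$. The second inclusion is obtained identically, using instead $|P_{v^\bot}(K\cap(-L))|\le|P_{v^\bot}(-L)|=|P_{v^\bot}L|$ and taking $y\in\Pi^*L$.

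The only mildly delicate ingredient is the elementary one-dimensional interval inequality and its pointwise-in-$q$ application under Fubini; everything else is a routine combination of the definition of $\Pi^*$, the hypothesis $M(K,L)=|K\cap(-L)|$ (which forces the maximum of $x\mapsto|K\cap(x-L)|$ to sit at the origin, so the slicing bound expands from the correct basepoint), and the convexity of $K+_\theta L$ already in hand.
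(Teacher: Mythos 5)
Your proof is correct but takes a genuinely different route from the paper. The paper establishes the key Lipschitz estimate
$$|K\cap(tv-L)|\ \ge\ M(K,L)-t\,\min\{|P_{v^\perp}K|,|P_{v^\perp}L|\}$$
by invoking the explicit first-derivative formula for $r\mapsto|K\cap(rv-L)|$ from \cite{M-R-S}, bounding that derivative pointwise along the ray by $-|P_{v^\perp}(K\cap(tv-L))|$, and then integrating. You instead give a self-contained Fubini argument: slice $K$ and $-L$ along lines parallel to $v$, apply the elementary one-dimensional shift bound $|J_1\cap(J_2+t)|\ge|J_1\cap J_2|-t$ (an interval loses at most $t$ in length under translation by $t$), and integrate over $v^\perp$, with the loss supported on $P_{v^\perp}(K\cap(-L))$. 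This avoids citing the Meyer--Reisner--Schmuckenschl\"ager derivative lemma, and in fact produces the slightly sharper constant $|P_{v^\perp}(K\cap(-L))|$, which is at most $\min\{|P_{v^\perp}K|,|P_{v^\perp}L|\}$ since $K\cap(-L)$ sits inside both $K$ and $-L$; from there the two arguments coincide. The remaining steps — deducing $(1-\theta)M(K,L)\,\Pi^*K\subseteq K+_\theta L$ and the same for $\Pi^*L$, then passing to the convex hull by convexity of $K+_\theta L$ — are exactly the paper's. What you buy with your route is elementariness and independence from external literature; what the paper's route buys is access to the precise derivative formula, which is also used nearby (Proposition \ref{Schmuckeslagerseveralbodies}), so citing it once is economical in context.
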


\begin{proof}
In \cite{M-R-S} it is proved that, given two convex bodies $K$, $L$, and $u\in S^{n-1}$, the function $f(r)=|K\cap(ru+L)|$ verifies $f^{'}(0)=|C_u^+(1,2)|-|C_u^-(2,1)|,$ where
$$C_u^+(1,2)=P_u(K\cap L)\cap\{\psi_K^+>\psi_L^+\geq \psi_K^->\psi_L^-\}$$
$$C_u^+(2,1)=P_u(K\cap L)\cap\{\psi_L^+\geq\psi_K^+> \psi_L^-\geq\psi_K^-\}$$
with $$\psi_K^+(y)=\max\{t\ :\ tu+y\in K\}$$ and
$$\psi_K^-(y)=\min\{t\ :\ tu+y\in K\}.$$

Thus $\frac{d}{d\overrightarrow{v}}|K\cap (x-L)|\geq -|P_v^+ K\cap(x-L)|\geq-\min\{|P_v^+K|,|P_v^+L|\}$. Consequently,
\begin{align*}
|&K\cap(x-L)|=\\
            &M(K,L)+\int_0^{|x|}\frac{d}{dt}\left|K\cap \left(t\frac{x}{|x|}-L\right)\right|\geq M(K,L)-|x|\min\{|P_v^+K|,|P_v^+L|\}.
\end{align*}
Hence, if $M(K,L)-|x|\min\{|P_v^+K|,|P_v^+L|\}\geq \theta M(K,L)$ then $x\in K+_\theta L$ and this holds if and only if
$\min\{||x||_{\Pi^*K},||x||_{\Pi^*L}\}\leq (1-\theta)M(K,L)$.

So, $(1-\theta)M(K,L)(\Pi^*K\cup \Pi^*L)\subseteq K+_\theta L$. The convexity of the set $K+_\theta L$ yields the desired result.
\end{proof}

\begin{rmk}
Taking $L=-K$ and $|K|=1$, we recover the left hand side inclusion in (\ref{Sc}).
\end{rmk}
\begin{rmk}
Applying Zhang inequality we deduce that %if there exists the limiting convolution body $C_1(K,L)$, then
$$
\min\{|K|^{n-1},|L|^{n-1}\}\left|\frac{C_1(K,L)}{M(K,L)}\right|\geq{2n \choose n}\frac{1}{n^n}
$$
which extends Zhang inequality (\ref{inq:zhang2}). Nevertheless, a stronger extension of Zhang inequality can be proved using Corollary \ref{increasingintheta}.
\end{rmk}

\begin{thm}\label{Zhangextension}
Let $K,L\subset\Re^n$ be convex bodies such that $\displaystyle{M(K,L)=|K\cap (-L)|}$. Then
\begin{equation}
\label{zhang:ext}
|C_1(K,L)|\geq{2n\choose n}\frac{1}{n^n}\frac{|K||L|}{M(K,L)}
\end{equation}
Equality holds if and only if $K=-L$ is a simplex.
\end{thm}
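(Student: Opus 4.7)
The plan is to combine two orthogonal ingredients: the integral identity $\int_{\Ren}|K\cap(x-L)|\,dx=|K||L|$, which gives an exact formula for the $L^1$-norm of the function $\theta\mapsto|K+_\theta L|$ on $[0,1]$; and Corollary~\ref{increasingintheta}, which supplies a universal pointwise upper bound for $|K+_\theta L|$ in terms of $|C_1(K,L)|$. Pitting one against the other will produce the claimed Zhang-type inequality, with the factor $\binom{2n}{n}/n^n$ arising from the explicit beta-integral that matches the two sides.

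First I would rewrite the Fubini identity in layer-cake form. Writing $v(\theta)=|K+_\theta L|$, which is non-increasing on $[0,1]$ by definition, the change of variables $s=\theta M(K,L)$ gives
\begin{equation*}
\frac{|K||L|}{M(K,L)} \;=\; \frac{1}{M(K,L)}\int_{\Ren}|K\cap(x-L)|\,dx \;=\; \int_0^1 v(\theta)\,d\theta.
\end{equation*}
Next I would upgrade Corollary~\ref{increasingintheta} into a pointwise inclusion involving $C_1(K,L)$. Fixing $\theta\in[0,1)$ and letting the larger parameter tend to $1^-$ in that corollary yields
\begin{equation*}
\frac{K+_\theta L}{1-\theta^{1/n}} \;\subseteq\; \lim_{\theta'\to 1^-}\frac{K+_{\theta'}L}{1-(\theta')^{1/n}} \;=\; n\,C_1(K,L),
\end{equation*}
since $(1-\theta')/(1-(\theta')^{1/n})\to n$. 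Taking volumes gives $v(\theta)\le n^n(1-\theta^{1/n})^n|C_1(K,L)|$ for every $\theta\in[0,1)$; this is trivially valid (both sides infinite or the statement vacuous) when $C_1(K,L)$ is unbounded.

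Integrating this bound against $d\theta$ and comparing with the identity above,
\begin{equation*}
\frac{|K||L|}{M(K,L)} \;\le\; n^n|C_1(K,L)|\int_0^1(1-\theta^{1/n})^n\,d\theta \;=\; n^n|C_1(K,L)|\cdot\frac{n!\,n!}{(2n)!} \;=\; \frac{n^n|C_1(K,L)|}{\binom{2n}{n}},
\end{equation*}
where the beta integral is computed via $u=\theta^{1/n}$. Rearranging yields (\ref{zhang:ext}).

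For equality, I would trace back through the chain. Equality forces $v(\theta)=n^n(1-\theta^{1/n})^n|C_1(K,L)|$ for almost every $\theta\in[0,1)$, hence by continuity for every such $\theta$, which in turn forces $(1-\theta_0^{1/n})^{-1}(K+_{\theta_0}L)$ to be independent of $\theta_0$. Taking $\theta_0=0$, this is precisely condition~(i) of Proposition~\ref{equalityconvexity2} (with the limit playing the role of $K+_{\theta}L/(1-\theta^{1/n})$ for $\theta\to 1$), so Proposition~\ref{equalityconvexity2} identifies the equality case as $K=-L$ being a simplex; conversely, in that case one verifies directly (using the explicit formulae from the proof of Proposition~\ref{equalityconvexity1} for simplices) that the inclusion becomes an equality of homothetic sets and that equality propagates through the computation. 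The main technical point to verify carefully will be the exchange of limit and volume in passing from Corollary~\ref{increasingintheta} to the volumetric bound when $|C_1(K,L)|$ is finite; this is handled by inner regularity, since the increasing family of sets $(1-\theta^{1/n})^{-1}(K+_\theta L)$ exhausts $nC_1(K,L)$.
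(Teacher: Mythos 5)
Your proof of the inequality itself reproduces the paper's argument essentially step for step: the inclusion $\frac{K+_{\theta_0}L}{1-\theta_0^{1/n}}\subseteq nC_1(K,L)$ extracted from Corollary~\ref{increasingintheta}, the layer-cake/Fubini identity $\int_0^1|K+_\theta L|\,d\theta=\frac{|K||L|}{M(K,L)}$, and the beta integral $\int_0^1(1-\theta^{1/n})^n\,d\theta=\binom{2n}{n}^{-1}$. This part is correct and is the paper's approach.

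There is, however, a small but genuine gap in your treatment of the equality case. From equality you correctly deduce $K+_\theta L=(1-\theta^{1/n})(K+L)$ for every $\theta\in[0,1)$, and then assert this "is precisely condition (i) of Proposition~\ref{equalityconvexity2}." But conditions (i) and (ii) of that proposition also require the endpoint $\theta=1$, i.e.\ $K+_1L=\{0\}$, and your parenthetical remark about "the limit playing the role of $K+_\theta L/(1-\theta^{1/n})$ for $\theta\to1$" does not establish this: $K+_1L$ is an actual set, namely the set where $|K\cap(x-L)|$ attains its maximum, and nothing in the limit determines it a priori. The paper closes this exactly here: by Corollary~\ref{inclusion:BM}, $\theta^{1/n}(K+_1L)+(1-\theta^{1/n})(K+L)\subseteq K+_\theta L=(1-\theta^{1/n})(K+L)$, and since $K+L$ is bounded, letting $\theta\to1^-$ forces $K+_1L=\{0\}$ (equivalently: any $y\in K+_1L$ lies in $(1-\theta^{1/n})(K+L)$ for every $\theta<1$, hence $y=0$). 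Once you insert this step, the chain to Proposition~\ref{equalityconvexity2}(ii) is valid and the equality characterization goes through.
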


\begin{proof}
From Corollary \ref{increasingintheta} we have that for every $0\leq\theta_0\leq \theta<1$
\begin{equation}\label{zhang:ext:inclusion}
\frac{K+_{\theta_0} L}{1-\theta_0^\frac{1}{n}}\subseteq\frac{K+_{\theta} L}{1-\theta^\frac{1}{n}}.
\end{equation}
Thus, letting $\theta\to 1^-$ we obtain that for every $\theta_0\in[0,1)$
$$
\frac{K+_{\theta_0} L}{1-\theta_0^\frac{1}{n}}
\subseteq
\lim_{\theta\to 1^-}
\frac{1-\theta}{1-\theta^\frac{1}{n}}\frac{K+_{\theta}L}{1-\theta}=nC_1(K,L),
$$
and taking volumes
$$
|K+_{\theta_0} L|\leq n^n(1-\theta_0^\frac{1}{n})^n|C_1(K,L)|
$$
for $\theta_0\in[0,1)$. Integrating over $[0,1]$ yields
%%%
%%%NEW
%%%
$$
\int_0^1|K+_{\theta_0} L|\,d\theta_0
\leq
n^n|C_1(K,L)|\int_0^1 (1-\theta_0^\frac{1}{n})^n\,d\theta_0=
n^n|C_1(K,L)|{2n\choose n}^{-1}.
$$
Integrating by parts and using Fubini's Theorem, the first integral equals
\begin{align*}
\int_0^1|K+_{\theta_0} L|\,d\theta_0
&=
\int_0^1\left|\left\{x\in K+L:\frac{|K\cap (x-L)|}{M(K,L)}\ge \theta_0 \right\}\right|\,d\theta_0
\\
&=
\int_{K+L}\frac{|K\cap (x-L)|}{M(K,L)}\,dx
=\frac{|K| |L|}{M(K,L)}
\end{align*}
from which the desired inequality follows.

If equality holds in (\ref{zhang:ext}), then (\ref{zhang:ext:inclusion}) holds also with equality for any $0\leq\theta_0\leq \theta<1$. Letting $\theta_0=0$,
\begin{equation}\label{zhang:ext:equality}
({1-\theta^\frac{1}{n}})(K+ L)={K+_{\theta} L}
\end{equation}
for any $\theta\in[0,1)$. Now Corollary \ref{inclusion:BM} implies
$$
\theta^{\frac{1}{n}}(K+_1L)
+
(1-\theta^{\frac{1}{n}})(K+L)
\subseteq
K+_\theta L
=
(1-\theta^{\frac{1}{n}})
(K+L).
$$
A compactness argument shows that $K+_1 L=\{0\}$, so equality (\ref{zhang:ext:equality}) holds for every $0\le\theta\le1$, and then (ii) in  Proposition \ref{equalityconvexity2} implies that $K=-L$ is a simplex.
\end{proof}

Finally, Corollary \ref{increasingintheta} allows us to recover Rogers-Shephard inequality (\ref{eqn:RoS3}). We also solve the problem of characterizing equality cases posed in \cite{Ro-Sh2}.

\begin{prop}\label{rmk:RoS3}
Let $K,L\subset\Re^n$ be convex bodies. Then
$$
|K+L|M(K,L)\leq {2n \choose n}|K||L|.
$$
Equality holds if and only if $K=-L$ is a simplex.
\end{prop}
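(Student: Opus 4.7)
The plan is to reproduce the integration scheme used in the proof of Theorem \ref{Zhangextension}, but starting from the weaker inclusion of Corollary \ref{inclusion:BM} (which holds without any ``$C_1$''-type limit) instead of from Corollary \ref{increasingintheta}. First, since both $|K+L|$, $M(K,L)$, $|K|$, $|L|$ are invariant under translating $K$ and $L$ separately, I would use property $(a2)$ in Proposition \ref{prop:properties} to assume without loss of generality that $M(K,L)=|K\cap(-L)|$, so that $0\in K+_1L$ and the hypothesis of Corollary \ref{inclusion:BM} is met.

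Next, from Corollary \ref{inclusion:BM} we know $(1-\theta^{1/n})(K+L)\subseteq K+_\theta L$, which after taking volumes gives
\begin{equation*}
|K+_\theta L|\ge (1-\theta^{1/n})^n|K+L|\qquad(\theta\in[0,1]).
\end{equation*}
Integrating this over $\theta\in[0,1]$ and using the elementary beta-integral $\int_0^1(1-\theta^{1/n})^n\,d\theta=\binom{2n}{n}^{-1}$, I get
\begin{equation*}
\int_0^1|K+_\theta L|\,d\theta\ge\frac{|K+L|}{\binom{2n}{n}}.
\end{equation*}
On the other hand, the Fubini computation already performed in the proof of Theorem \ref{Zhangextension} shows
\begin{equation*}
\int_0^1|K+_\theta L|\,d\theta=\frac{|K||L|}{M(K,L)}.
\end{equation*}
Combining these two displays and rearranging yields the stated inequality $|K+L|M(K,L)\le\binom{2n}{n}|K||L|$.

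For the equality analysis, equality in the chain forces equality in $|K+_\theta L|=(1-\theta^{1/n})^n|K+L|$ for almost every $\theta\in[0,1]$, and hence (by the continuity of $\theta\mapsto|K+_\theta L|$, a consequence of Lemma \ref{boundary}) for every $\theta$. Because $K+_\theta L$ and $(1-\theta^{1/n})(K+L)$ are convex bodies with the containment of Corollary \ref{inclusion:BM} and equal volumes, they must coincide, i.e.\ $(1-\theta^{1/n})(K+L)=K+_\theta L$ for every $\theta\in[0,1]$. This is precisely condition (ii) of Proposition \ref{equalityconvexity2}, which is equivalent to $K=-L$ being an $n$-simplex. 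The converse is immediate from Proposition \ref{equalityconvexity2}, since in that case $M(K,L)=|K|$ and $|K+L|=|2K|=2^n|K|=\binom{2n}{n}\cdot\frac{|K||L|}{M(K,L)}\cdot\frac{2^n}{\binom{2n}{n}}$ — so I would instead verify equality directly by substituting the simplex formulas computed at the end of the proof of Proposition \ref{equalityconvexity1}, which give $K+L=2K$ and $M(K,L)=|K|$, and observing that the Fubini/volume identity becomes an equality.

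The main obstacle is the equality case, since one has to upgrade an almost-everywhere equality of volumes to a set-theoretic equality; fortunately the monotonicity in Corollary \ref{increasingintheta} together with continuity of $\theta\mapsto|K+_\theta L|$ via Lemma \ref{boundary} makes this step routine, and the rigidity of the resulting identity is then handled cleanly by Proposition \ref{equalityconvexity2}.
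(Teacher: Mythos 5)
Your proof follows essentially the same route as the paper's: normalize so $M(K,L)=|K\cap(-L)|$, use the inclusion $(1-\theta^{1/n})(K+L)\subseteq K+_\theta L$, take volumes, integrate over $\theta\in[0,1]$, and identify the left side via Fubini with $|K||L|/M(K,L)$; equality is then funneled into condition (ii) of Proposition~\ref{equalityconvexity2}. The paper invokes Corollary~\ref{increasingintheta} with $\theta_0=0$ while you invoke Corollary~\ref{inclusion:BM} and drop the $K+_1L$ term (legal since $0\in K+_1L$ under the normalization), but these yield the same working inclusion, so the arguments are the same.

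One small slip in your converse check: you write $|K+L|=|2K|=2^n|K|$, but when $K=-L$ one has $K+L=K-K$, the difference body, and for a simplex $|K-K|=\binom{2n}{n}|K|$ by Rogers--Shephard, not $2^n|K|$ (indeed $K-K\neq 2K$ unless $K$ is centrally symmetric). The cleanest verification is the one you gesture at: $K=-L$ a simplex gives condition (ii) of Proposition~\ref{equalityconvexity2}, so $(1-\theta^{1/n})(K+L)=K+_\theta L$ for all $\theta$, whence the volume inequality and its integral are equalities, giving equality in the final estimate without computing $|K+L|$ explicitly.
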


\begin{proof}
By a translation we may assume that (\ref{Mzero}) is satisfied. A similar argument to that used in Theorem \ref{Zhangextension}, taking $\theta_0=0$ in Corollary \ref{increasingintheta}, applying volumes and integrating in $\theta$, shows the desired result.

If equality holds, then (ii) in Proposition \ref{equalityconvexity2} is satisfied, and therefore  $K=-L$ is a simplex.
\end{proof}

\section{Convolution of $m$ bodies}
\label{mbodies}

In this section we will extend the definition of $\theta$-convolution bodies to more than two sets. The $\theta$-convolution is not associative (as a simple computation with Euclidean balls of different radius shows) so a  definition of an $m$-fold convolution can not be made inductively. Nevertheless, since $|K\cap \left(x-L\right)|=\chi_{K}\ast\chi_L(x)$ and the convolution is associative, it seems natural to make the following extension of $\theta$-convolution bodies:

\begin{defn}
Let $\mcuerpos$ be $m$ measurable sets in $\Re^n$ and let $\theta\in [0,1]$. We define their $\theta$-convolution as the set
$$
K_1+_\theta\dots+_\theta K_m=\{x\in K_1+\cdots+K_m\,:\,\chi_{K_1}\ast\dots\ast\chi_{K_m}(x)\geq \theta M(K_1,\dots,K_m)\}
$$
when $M(\mcuerpos)=\max_{x\in\Re^n}\chi_{K_1}\ast\dots\ast\chi_{K_m}(x)$ is finite.
\end{defn}

For $\theta=0$ the set $K_1+_0\dots+_0K_m$ is just the support of the function $\chi_{K_1}\ast\dots\ast\chi_{K_m}$, the usual Minkowski sum $K_1+\dots+K_m$.

The commutative and associative properties of the convolution imply trivially that
\begin{enumerate}
    \item[(a1)] $\chi_{K_1}\ast\dots\ast\chi_{K_m}=\chi_{K_{\sigma(1)}}\ast\dots\ast\chi_{K_{\sigma(m)}}$ for any $\sigma $ permutation of $\{1,...,m\}$.
    \item[(a2)] $\chi_{z+K_1}\ast\dots\ast\chi_{K_m}(x)=\chi_{K_1}\ast\dots\ast\chi_{K_m}(x-z)$
    \item[(a3)]
            $\chi_{TK_1}\ast\dots\ast\chi_{TK_m}(x)=|\det T|^{m-1}
            \chi_{K_1}\ast\dots\ast\chi_{K_m}(T^{-1}x)$ for any $T\in GL_n(\Re)$.
\end{enumerate}

Consequently we have the following result, analogous to Proposition \ref{prop:properties}.

\begin{prop}
Let $\mcuerpos$ be compact sets in $\Re^n$, $\lambda\in\Re, \theta\in [0,1]$, $x\in\Re^n$  and $T\in GL_n(\Re)$. Then:

\begin{enumerate}
\item[(b1)] $(\lambda K_1)+_{\theta}\dots+_\theta(\lambda K_m)=\lambda(K_1+_{\theta}\dots+_\theta K_m)$
\item[(b2)] $K_1+_{\theta}\dots+_\theta K_m=K_{\sigma(1)}+_{\theta}\dots+_\theta K_{\sigma(m)}$ for any $\sigma$ permutation of $\{1,...,m\}$.
\item[(b3)] $(x+K_1)+_\theta K_2+_{\theta}\dots+_\theta K_m=x+(K_1+_{\theta}\dots+_\theta K_m)$
\item[(b4)] $TK_1+_{\theta}\dots+_\theta TK_m=T(K_1+_{\theta}\dots+_\theta K_m)$
\end{enumerate}

\end{prop}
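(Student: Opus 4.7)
The plan is to derive each of (b1)--(b4) as a direct consequence of the corresponding convolution identity (a1)--(a3), together with the way those identities force the normalizing constant $M(K_1,\dots,K_m)$ to transform. The unifying observation is that $K_1+_\theta\dots+_\theta K_m$ is by definition the $\theta M$-superlevel set of the convolution $\chi_{K_1}*\dots*\chi_{K_m}$; so any identity for the convolution translates mechanically into an identity for its superlevel sets, as soon as we record how the supremum $M$ rescales. Since the $K_i$ are compact, each convolution in sight is a bounded, compactly supported continuous function, so all the suprema in question are attained and finite.

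First I would dispose of (b2) and (b3). For (b2), property (a1) says that $\chi_{K_1}*\dots*\chi_{K_m}$ is literally invariant under permuting the factors, so both the value at every point and the supremum $M$ are preserved; the superlevel sets then coincide, which is (b2). For (b3), property (a2) gives $\chi_{x+K_1}*\chi_{K_2}*\dots*\chi_{K_m}(y) = \chi_{K_1}*\dots*\chi_{K_m}(y-x)$, which leaves $M$ unchanged and translates the superlevel set by $x$; this is exactly (b3).

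Then I would treat (b1) and (b4) in one stroke, since (b1) is the special case $T=\lambda I$ of (b4) (after pushing $|\lambda|^n$ through; the case $\lambda=0$ being immediate). Starting from (a3), namely
\[
(\chi_{TK_1}*\dots*\chi_{TK_m})(x)=|\det T|^{m-1}(\chi_{K_1}*\dots*\chi_{K_m})(T^{-1}x),
\]
taking the supremum over $x\in\Re^n$ yields $M(TK_1,\dots,TK_m)=|\det T|^{m-1} M(K_1,\dots,K_m)$. Substituting both identities into the defining inequality $(\chi_{TK_1}*\dots*\chi_{TK_m})(x)\ge\theta M(TK_1,\dots,TK_m)$, the common factor $|\det T|^{m-1}$ cancels, leaving $(\chi_{K_1}*\dots*\chi_{K_m})(T^{-1}x)\ge\theta M(K_1,\dots,K_m)$. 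This is exactly the condition $T^{-1}x\in K_1+_\theta\dots+_\theta K_m$, which proves (b4).

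No genuine obstacle is anticipated; the only point to be careful about is the clean cancellation of the factor $|\det T|^{m-1}$ on both sides of the defining inequality, which ensures that the superlevel threshold $\theta$ is preserved under the transformation. Everything else is a formal reading-off from (a1)--(a3), and the proposition is in this sense a bookkeeping analogue of Proposition~\ref{prop:properties} for $m$ bodies.
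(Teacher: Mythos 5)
Your proof is correct and takes essentially the same route the paper intends: the paper itself states that (b1)--(b4) follow from the convolution identities (a1)--(a3) without writing out the derivation, and you have simply supplied that derivation, including the key observation that $M(TK_1,\dots,TK_m)=|\det T|^{m-1}M(K_1,\dots,K_m)$ so the factor cancels in the defining inequality. The only point worth noting is that the $\theta$-convolution is defined as the superlevel set \emph{intersected with} $K_1+\cdots+K_m$; this constraint transforms the same way under each of the operations (permutation leaves it fixed, translation shifts it by $x$, and $T$ maps it to $T(K_1+\cdots+K_m)$), so your argument goes through unchanged, and the $\theta=0$ case is also covered.
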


The convexity is transmitted to the $\theta$-convolution of $m$ convex bodies.

\begin{prop}
Let $\mcuerpos$ be convex bodies in $\Re^n$. Then $K_1+_{\theta}\dots+_\theta K_m$ is a convex body.
\end{prop}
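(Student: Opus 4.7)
The plan is to show that the function $f = \chi_{K_1}*\cdots*\chi_{K_m}$ has convex super-level sets, since by definition $K_1+_\theta\cdots+_\theta K_m = \{x : f(x)\ge \theta M(K_1,\ldots,K_m)\}$ intersected with the convex set $K_1+\cdots+K_m$. The natural way to do this is to realize $f(x)$ as a volume in a higher-dimensional Euclidean space and apply the classical Brunn--Minkowski inequality there.

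First, by unfolding the iterated convolution using Fubini's theorem, one writes
$$
f(x)=\int_{\Re^{n(m-1)}}\chi_{K_1}(x_1)\cdots\chi_{K_{m-1}}(x_{m-1})\chi_{K_m}\!\Big(x-\sum_{i=1}^{m-1}x_i\Big)\,dx_1\cdots dx_{m-1},
$$
so $f(x)$ equals the $n(m-1)$-dimensional Lebesgue measure of the set
$$
S(x)=\Big\{(x_1,\ldots,x_{m-1})\in K_1\times\cdots\times K_{m-1}\st x-\sum_{i=1}^{m-1}x_i\in K_m\Big\}.
$$
Since $S(x)$ is the intersection of the convex product $K_1\times\cdots\times K_{m-1}$ with the affine preimage of the convex body $K_m$ under the map $(x_1,\ldots,x_{m-1})\mapsto x-\sum x_i$, it is a convex subset of $\Re^{n(m-1)}$.

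Next, I would verify the Minkowski-sum inclusion
$$
t\,S(x)+(1-t)\,S(y)\subseteq S(tx+(1-t)y),\qquad t\in[0,1],
$$
for any $x,y\in K_1+\cdots+K_m$. This follows coordinatewise: the convexity of each $K_i$ handles the first $m-1$ components, while the last constraint
$$
(tx+(1-t)y)-\sum_{i=1}^{m-1}(tx_i+(1-t)y_i)=t\Big(x-\sum x_i\Big)+(1-t)\Big(y-\sum y_i\Big)
$$
lies in $K_m$ by its convexity. Applying the classical Brunn--Minkowski inequality in $\Re^{n(m-1)}$ then gives
$$
|S(tx+(1-t)y)|^{\frac{1}{n(m-1)}}\ge t\,|S(x)|^{\frac{1}{n(m-1)}}+(1-t)\,|S(y)|^{\frac{1}{n(m-1)}},
$$
so $f^{\frac{1}{n(m-1)}}$ is concave on the convex support $K_1+\cdots+K_m$.

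Concavity of $f^{\frac{1}{n(m-1)}}$ makes its super-level sets convex, and the super-level set at height $(\theta M(K_1,\ldots,K_m))^{\frac{1}{n(m-1)}}$ coincides with $K_1+_\theta\cdots+_\theta K_m$. Compactness is immediate because the set sits inside $K_1+\cdots+K_m$ and equals the preimage under the continuous function $f$ of the closed set $[\theta M(K_1,\ldots,K_m),\infty)$. The only place that requires a bit of care is the verification of the Minkowski-sum inclusion for $S(\cdot)$ with the correct dimension count, so that one applies Brunn--Minkowski with the right exponent $\frac{1}{n(m-1)}$; once this is set up properly, the rest is immediate.
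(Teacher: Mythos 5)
Your argument is correct, but it takes a genuinely different route from the paper's own proof of this particular proposition. The paper dispatches it in one line by observing that $\chi_{K_1},\dots,\chi_{K_m}$ are log-concave, that convolution preserves log-concavity (Pr\'ekopa's theorem), and that super-level sets of log-concave functions are convex; this is shorter but leans on Pr\'ekopa. You instead express $f(x)=\chi_{K_1}\ast\cdots\ast\chi_{K_m}(x)$ as the $n(m-1)$-dimensional volume of the convex section $S(x)$, verify the Minkowski-sum inclusion $t\,S(x)+(1-t)\,S(y)\subseteq S(tx+(1-t)y)$, and invoke the classical Brunn--Minkowski inequality in $\Re^{n(m-1)}$ to obtain that $f^{1/(n(m-1))}$ is concave on the support. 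This yields the (strictly stronger) fact that $f$ is $\tfrac{1}{(m-1)n}$-concave on $K_1+\cdots+K_m$, not merely log-concave. In fact your $S(x)$ is a volume-preserving linear image (via $t_j=x_1+\cdots+x_j$) of the set $\Omega_{m-1}(x)$ that the paper introduces in Proposition \ref{incconvexitym}, and your concavity conclusion is precisely what drives that proposition and Corollary \ref{increasinginthetam}. So you have rederived the engine behind the subsequent results; what the paper's proof buys is brevity for the bare convexity statement, while yours buys the quantitative power concavity for free. One small caveat worth noting in both proofs: for $\theta=1$ the set may degenerate to a point, so ``convex body'' (compact, convex, nonempty interior) really requires $\theta<1$, a point you could flag explicitly after your compactness remark.
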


\begin{proof}
The characteristic function of each convex body $K_i$ is log-concave. The convolution of log-concave functions is log-concave, and the level sets of log-concave functions are convex.
\end{proof}

Corollary \ref{increasingintheta} is proved by using that $\chi_{K_1}\ast\chi_{K_2}$ is $\frac{1}{n}$-concave in its support. In order to generalize this result, we have to prove that  the convolution of more than two characteristic functions is $s$-concave for some s. We get this result for $s^{-1}=(m-1)n$ by considering sets in dimension $(m-1)n$. As in the case $m=2$, we may assume, without loss of generality, that
\begin{equation}\label{Mmzero}
M(K_1,\dots,K_m)=\chi_{K_1}\ast\dots\ast\chi_{K_m}(0).
\end{equation}

\begin{prop}\label{incconvexitym}
Let $\mcuerpos\subset\Ren$ be convex bodies satisfying (\ref{Mmzero}). Then, for any $\theta_1,\theta_2, \lambda_1,\lambda_2 \in[0,1]$ such that $\lambda_1+\lambda_2\leq 1$ we have
    \begin{equation}
    \label{inclusion1:convexitym}
    \lambda_1 (K_1+_{\theta_1}\dots+_{\theta_1}K_m)+ \lambda_2(K_1+_{\theta_2}\dots+_{\theta_2}K_m)
    \subseteq
    K_1+_{\theta}\dots+_{\theta}K_m,
    \end{equation}
    where $1-\theta^{\frac{1}{(m-1)n}}=\lambda_1(1-\theta_1^{\frac{1}{(m-1)n}})+\lambda_2(1-\theta_2^{\frac{1}{(m-1)n}}).$
\end{prop}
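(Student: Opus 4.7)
The plan is to imitate the argument of Proposition \ref{incconvexity} by lifting everything to $\Re^{(m-1)n}$ and invoking Brunn-Minkowski there. The crucial observation is that for any $x\in\Ren$,
$$\chi_{K_1}\ast\cdots\ast\chi_{K_m}(x)=\int_{(\Ren)^{m-1}}\chi_{K_1}\Bigl(x-\sum_{i=2}^m y_i\Bigr)\prod_{i=2}^m\chi_{K_i}(y_i)\,dy_2\cdots dy_m=|A(x)|,$$
where
$$A(x)=\Bigl\{(y_2,\dots,y_m)\in\Re^{(m-1)n}:y_i\in K_i\text{ for each }i,\ x-\textstyle\sum_{i=2}^m y_i\in K_1\Bigr\}$$
and $|\cdot|$ denotes the $(m-1)n$-dimensional Lebesgue measure. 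Since each $K_i$ is convex and $A(x)$ is cut out by convex constraints, $A(x)$ itself is convex. This identifies the $m$-fold convolution with a volume in dimension $(m-1)n$, which explains the exponent $1/((m-1)n)$ appearing in the statement.

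The key step will be the higher-dimensional analogue of (\ref{inclusion0:convexity}): for $x_1,x_2\in\Ren$ and $\lambda_1,\lambda_2\ge 0$ with $\lambda_1+\lambda_2\le 1$,
$$A(\lambda_1 x_1+\lambda_2 x_2)\supseteq\lambda_1 A(x_1)+\lambda_2 A(x_2)+(1-\lambda_1-\lambda_2)A(0).$$
To verify this I would pick $y^{(1)}\in A(x_1)$, $y^{(2)}\in A(x_2)$ and $y^{(0)}\in A(0)$, and form $y=\lambda_1 y^{(1)}+\lambda_2 y^{(2)}+(1-\lambda_1-\lambda_2)y^{(0)}$. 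Componentwise convexity of each $K_i$ forces $y_i\in K_i$, while the identity
$$\lambda_1 x_1+\lambda_2 x_2-\sum_{i=2}^m y_i=\lambda_1\Bigl(x_1-\sum_{i=2}^m y^{(1)}_i\Bigr)+\lambda_2\Bigl(x_2-\sum_{i=2}^m y^{(2)}_i\Bigr)+(1-\lambda_1-\lambda_2)\Bigl(-\sum_{i=2}^m y^{(0)}_i\Bigr)$$
exhibits the left-hand side as a convex combination of elements of $K_1$, placing it in $K_1$ by convexity.

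Taking $(m-1)n$-dimensional volumes in the inclusion above and applying Brunn-Minkowski in $\Re^{(m-1)n}$ yields
$$|A(\lambda_1 x_1+\lambda_2 x_2)|^{\frac{1}{(m-1)n}}\ge\lambda_1|A(x_1)|^{\frac{1}{(m-1)n}}+\lambda_2|A(x_2)|^{\frac{1}{(m-1)n}}+(1-\lambda_1-\lambda_2)|A(0)|^{\frac{1}{(m-1)n}}.$$
Combining this with the hypotheses $|A(x_j)|\ge\theta_j M(K_1,\dots,K_m)$ for $j=1,2$ and the normalisation $|A(0)|=M(K_1,\dots,K_m)$ from (\ref{Mmzero}), the right-hand side equals $\theta^{1/((m-1)n)}M(K_1,\dots,K_m)^{1/((m-1)n)}$ with $\theta$ as prescribed in the statement, and hence $\lambda_1 x_1+\lambda_2 x_2\in K_1+_\theta\cdots+_\theta K_m$.

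The only genuine obstacle is spotting the right lifting dimension: because $\theta$-convolution is not associative for $m\ge 3$, one cannot reduce to the $m=2$ case inductively, and instead must realise $\chi_{K_1}\ast\cdots\ast\chi_{K_m}$ as the volume of a single convex body in $\Re^{(m-1)n}$. Once this reformulation is in place, the proof proceeds in parallel with Proposition \ref{incconvexity}, with $n$ replaced by $(m-1)n$ throughout.
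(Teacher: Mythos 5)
Your proof is correct and takes essentially the same approach as the paper: both lift the $m$-fold convolution to a set in $\Re^{(m-1)n}$ whose volume equals $\chi_{K_1}\ast\cdots\ast\chi_{K_m}(x)$, establish a Minkowski-sum inclusion for these lifted sets via convexity, and apply Brunn--Minkowski in dimension $(m-1)n$. The only difference is cosmetic — the paper parametrizes the lifted set by cumulative sums $t_j$ (i.e.\ $t_j-t_{j-1}\in K_j$) rather than your direct coordinates $y_i\in K_i$, a volume-preserving linear change of variables.
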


\begin{proof}
First at all, notice that for any $x\in\Ren$,
\begin{align*}
&\chi_{K_1}\ast\dots\ast\chi_{K_m}(x)
\\
&=\int_{\Ren} \chi_{K_1}\ast\dots\ast\chi_{K_{m-1}}(t_{m-1})\chi_{K_m}(x-t_{m-1})\,dt_{m-1}
\\
&=\int_{\Ren}\int_{\Ren} \chi_{K_1}\ast\dots
\\
&\hskip1cm\dots\ast\chi_{K_{m-2}}(t_{m-2})
\chi_{K_{m-1}}(t_{m-1}-t_{m-2})\chi_{K_{m}}(x-t_{m-1})\,dt_{m-2}\,dt_{m-1}
\\
&=\cdots
\\
&=\int_{\Ren}\cdots\int_{\Ren}
\chi_{K_1}(t_1)
\chi_{K_{2}}(t_{2}-t_{1})\dots
\\
&\hskip1cm\dots
\chi_{K_{m-1}}(t_{m-1}-t_{m-2})
\chi_{K_{m}}(x-t_{m-1})\,dt_1\dots dt_{m-1}
\\
&=|\Omega_{m-1}(x)|
\end{align*}
where
\begin{align*}
\Omega_{m-1}(x)=\{(t_1,\dots t_{m-1})\in\Re^{(m-1)n}:\,
&t_1\in K_1,\,
t_2-t_1\in K_2,\,
\dots
\\
\dots\,
&t_{m-1}-t_{m-2}\in K_{m-1},\,
x-t_{m-1}\in K_m
\}.
\end{align*}
The convexity of $\mcuerpos$ gives
\begin{equation}
\label{inclusion0:convexitym}
\Omega_{m-1}(\lambda_1x_1+\lambda_2x_2)
\supseteq
(1-\lambda_1-\lambda_2)\Omega_{m-1}(0)
+
\lambda_1 \Omega_{m-1}(x_1)
+
\lambda_2 \Omega_{m-1}(x_2)
\end{equation}
for any $x_1,x_2\in\Ren$ and $\lambda_1,\lambda_2\ge0$ such that $\lambda_1+\lambda_2\le1$. In particular, if $x_1\in K_1+_{\theta_1}\cdots +_{\theta_1}K_m$ and $x_2\in K_1+_{\theta_2}\cdots +_{\theta_2}K_m$, Brunn-Minkowski inequality in $\Re^{(m-1)n}$ implies
\begin{equation}
\label{BM:convexitym}
|\Omega_{m-1}(\lambda_1x_1+\lambda_2x_2)|
\ge
((1-\lambda_1-\lambda_2)+\lambda_1 \theta_1^\frac{1}{(m-1)n}+\lambda_2\theta_2^\frac{1}{(m-1)n})^{(m-1)n}
|\Omega_{m-1}(0)|,
\end{equation}
which shows that $\lambda_1x_1+\lambda_2x_2\in K_1+_{\theta}\dots+_{\theta}K_m$ with
$$
1-\theta^{\frac{1}{(m-1)n}}=\lambda_1(1-\theta_1^{\frac{1}{(m-1)n}})+\lambda_2(1-\theta_2^{\frac{1}{(m-1)n}}).
\vspace{-.9cm}
$$
\end{proof}

\begin{rmk}\label{boundarym}
The concavity of the function
$$
x\in \sumam{}\mapsto \chi_{K_1}\ast\dots\ast\chi_{K_m}(x)^{\frac{1}{(m-1)n}}
$$
allows us to write, for any $\theta\in[0,1)$, the boundary of $K_1+_\theta\cdots+_\theta K_m$ as
$$
\partial (\sumam{\theta})
=
\{x\in \sumam{}: \chi_{K_1}\ast\dots\ast\chi_{K_m}(x)=\theta M(\mcuerpos)\}.
$$
In particular, for $\theta=0$,
$$
\partial (\sumam{})
=
\{x\in \sumam{} : \chi_{K_1}\ast\dots\ast\chi_{K_m}(x)=0\}.
$$
That implies that for any $x\in \sumam{}$, $x\not\in \sumam{1}$, there exists a (unique) $\theta\in [0,1)$ such that $x\in\partial (\sumam{\theta})$.
\end{rmk}

\begin{rmk}\label{convolution1:equalitym}
Taking $\theta_1=\theta_2=1$, and following the proof of Proposition \ref{incconvexitym}, we get equality in the inclusion (\ref{inclusion0:convexitym}) and in the Brunn-Minkowski inequality (\ref{BM:convexitym}). Consequently, $\Omega_{m-1}(x)$ are all homothetic to $\Omega_{m-1}(0)$ for any $x\in K_1+_1\cdots+_1 K_m$. Using that they all have the same volume, we get that they are translations of $\Omega_{m-1}(0)$.
\end{rmk}

In particular, taking $\theta_1=\theta_2$ and $\lambda_2=1-\lambda_1$ in Proposition \ref{incconvexitym}, we get the convexity of $K_1+_{\theta}\dots+_{\theta}K_m$. Also, taking $\theta_1=\theta_2=\theta_0$, we get the version of Corollary \ref{increasingintheta} to $m$ bodies.

\begin{cor}\label{increasinginthetam}
Let $\mcuerpos\subset\Re^n$ be convex bodies such that (\ref{Mmzero}) is satisfied. Then for any $0<\theta_0\le\theta<1$
$$
\frac{\sumam{\theta_0}}{1-\theta_0^\frac{1}{(m-1)n}}
\subseteq
\frac{\sumam{\theta}}{1-\theta^\frac{1}{(m-1)n}}.
$$
\end{cor}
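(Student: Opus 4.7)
The plan is to mimic the two-body argument of Corollary \ref{increasingintheta} essentially verbatim, using Proposition \ref{incconvexitym} as the input. First I would set $\theta_1=\theta_2=\theta_0$ in that proposition. The hypothesis $\lambda_1+\lambda_2\le 1$ will then translate, after the reparametrization, into precisely the comparison $\theta_0\le\theta$ that we want.

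More concretely, for any $\lambda_1,\lambda_2\in[0,1]$ with $\lambda_1+\lambda_2\le 1$, Proposition \ref{incconvexitym} gives
$$
\lambda_1(\sumam{\theta_0})+\lambda_2(\sumam{\theta_0})\subseteq \sumam{\theta},
$$
with $1-\theta^{1/((m-1)n)}=(\lambda_1+\lambda_2)\bigl(1-\theta_0^{1/((m-1)n)}\bigr)$. Now I would invoke the convexity of $\sumam{\theta_0}$ (which follows from the special case $\theta_1=\theta_2$ of Proposition \ref{incconvexitym}, exactly as noted in the paragraph just before the statement) to collapse the left-hand side to $(\lambda_1+\lambda_2)(\sumam{\theta_0})$.

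Finally I would solve for $\lambda_1+\lambda_2$, obtaining
$$
\lambda_1+\lambda_2=\frac{1-\theta^{1/((m-1)n)}}{1-\theta_0^{1/((m-1)n)}},
$$
and note that the side condition $\lambda_1+\lambda_2\le1$ is equivalent to $\theta_0\le\theta$. Dividing the resulting inclusion $\frac{1-\theta^{1/((m-1)n)}}{1-\theta_0^{1/((m-1)n)}}(\sumam{\theta_0})\subseteq\sumam{\theta}$ by $1-\theta^{1/((m-1)n)}$ yields the claim. There is no real obstacle here: the $(m-1)n$-dimensional Brunn-Minkowski ingredient has already been absorbed into Proposition \ref{incconvexitym}, so once that proposition is available, the corollary is a direct bookkeeping consequence of choosing the exponents $\theta_1=\theta_2$ and rescaling.
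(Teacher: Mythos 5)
Your proof is correct and follows exactly the route the paper takes: the paper dismisses the corollary with the one line ``taking $\theta_1=\theta_2=\theta_0$, we get the version of Corollary \ref{increasingintheta} to $m$ bodies,'' and your argument is a faithful expansion of what that means, mirroring the two-body proof of Corollary \ref{increasingintheta} with $n$ replaced by $(m-1)n$. (One small remark: you do not even need the convexity of $\sumam{\theta_0}$ here, since only the trivial inclusion $(\lambda_1+\lambda_2)A\subseteq \lambda_1 A+\lambda_2 A$ is required to chain with Proposition \ref{incconvexitym}; the paper's own two-body write-up has the same unnecessary convexity step, so this is a cosmetic point, not a gap.)
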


We can easily get the Brunn-Minkowski type inequality for $m$ bodies, a generalization of Corollary \ref{BMtheta}.

\begin{cor}
Let $\mcuerpos\subset\Re^n$ be convex bodies. Then
\begin{equation*}
|\sumam{\theta}|^{\frac{1}{n}}\geq \left(1-\theta^{\frac{1}{(m-1)n}}\right)(|K_1|^{\frac{1}{n}}+\cdots +|K_m|^{\frac{1}{n}}).
\end{equation*}

\end{cor}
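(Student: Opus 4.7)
The strategy is to mimic the two-body proof (Corollary \ref{BMtheta}), which went through the chain: use Proposition \ref{incconvexity} with a clever choice of parameters to obtain the inclusion of Corollary \ref{inclusion:BM}, take volumes, and finish with the classical Brunn-Minkowski inequality. The $m$-body analog should work identically, with exponent $1/n$ in the two-body case replaced by $1/((m-1)n)$ in the $m$-body case, since this is precisely the concavity exponent that Proposition \ref{incconvexitym} produces.

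First, using property (b3) of the $m$-fold convolution, translate each $K_i$ so that $M(K_1,\dots,K_m)=\chi_{K_1}\ast\cdots\ast\chi_{K_m}(0)$; this translation changes neither the volumes involved nor the statement to be proved. In particular, $0\in \sumam{1}$.

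Next, apply Proposition \ref{incconvexitym} with the choices
\[
\theta_1=1,\quad \theta_2=0,\quad \lambda_1=\theta^{1/((m-1)n)},\quad \lambda_2=1-\theta^{1/((m-1)n)},
\]
which satisfy $\lambda_1+\lambda_2=1$ and produce $\theta$ itself via the relation $1-\theta^{1/((m-1)n)}=\lambda_1(1-1)+\lambda_2(1-0)$. This yields the $m$-body analog of Corollary \ref{inclusion:BM}:
\[
\theta^{1/((m-1)n)}(\sumam{1})+(1-\theta^{1/((m-1)n)})(K_1+\cdots+K_m)\subseteq \sumam{\theta}.
\]
Since $0\in \sumam{1}$, the left-hand side contains $(1-\theta^{1/((m-1)n)})(K_1+\cdots+K_m)$.

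Taking $n$-th roots of Lebesgue measure and using homogeneity,
\[
|\sumam{\theta}|^{1/n}\ge (1-\theta^{1/((m-1)n)})|K_1+\cdots+K_m|^{1/n}.
\]
Finally, the iterated classical Brunn-Minkowski inequality gives
\[
|K_1+\cdots+K_m|^{1/n}\ge |K_1|^{1/n}+\cdots+|K_m|^{1/n},
\]
which combined with the previous display yields the desired conclusion.

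There is no real obstacle: the only thing to check is that the parameters $(\theta_1,\theta_2,\lambda_1,\lambda_2)$ are admissible for Proposition \ref{incconvexitym} (they are) and that the set $\sumam{1}$ contains the origin after the initial translation (it does, by construction of Mmzero). The $(m-1)n$ appearing in the exponent is precisely the dimension of the auxiliary set $\Omega_{m-1}(x)$ used in the proof of Proposition \ref{incconvexitym}, and it is inherited mechanically through our parameter choice.
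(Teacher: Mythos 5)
Your proof is correct and mirrors precisely the route the paper intends: it is the direct $m$-body analog of Corollary~\ref{inclusion:BM} followed by Corollary~\ref{BMtheta}, obtained by feeding $\theta_1=1$, $\theta_2=0$ into Proposition~\ref{incconvexitym}, discarding the $K_1+_1\cdots+_1K_m$ summand (legitimate since it contains $0$ after normalizing via (\ref{Mmzero})), and finishing with the iterated classical Brunn-Minkowski inequality. The paper leaves this corollary without an explicit proof, remarking only that it generalizes Corollary~\ref{BMtheta}, and your argument is exactly that generalization.
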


Finally, we get Zhang and Roger-Shephard type inequalities for $m$ bodies. For $\mcuerpos\subseteq\Ren$, let
$$
C_1(\mcuerpos)
=\lim_{\theta\to 1}\frac{\sumam{\theta}}{1-\theta}.
$$
As in the case $m=2$, the existence of the previous limit follows from Corollary \ref{increasinginthetam}.

\begin{cor}
Let $\mcuerpos\subseteq\Ren$ be convex bodies. Then
$$
|\sumam{}|
\le
{m\,n\choose n}\frac{|K_1|\cdots |K_m|}{M(\mcuerpos)}
\le
(m-1)^n n^n
|C_1(\mcuerpos)|
$$
\end{cor}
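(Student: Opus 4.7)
The plan is to mimic the proofs of Theorem \ref{Zhangextension} and Proposition \ref{rmk:RoS3} from the two-body setting, applying Corollary \ref{increasinginthetam} at the two extremes $\theta_0=0$ (for the left inequality) and $\theta\to 1^-$ (for the right one). After a translation we may assume (\ref{Mmzero}); the existence of $C_1(\mcuerpos)$ as a set-theoretic limit follows from Corollary \ref{increasinginthetam}, and if $|C_1(\mcuerpos)|=\infty$ the right inequality is trivial, so we may assume it is finite.

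For the first inequality, setting $\theta_0=0$ in Corollary \ref{increasinginthetam} yields $(1-\theta^{1/((m-1)n)})(K_1+\cdots+K_m)\subseteq\sumam{\theta}$; taking $n$-dimensional volumes and integrating over $\theta\in[0,1]$ gives
$$|K_1+\cdots+K_m|\int_0^1(1-\theta^{1/((m-1)n)})^n\,d\theta\le\int_0^1|\sumam{\theta}|\,d\theta.$$
The left integral, via the substitution $u=\theta^{1/((m-1)n)}$ and the Beta-function identity, equals $\binom{mn}{n}^{-1}$. The right integral, via the layer-cake formula, Fubini, and the fact that $\int_{\Re^n}\chi_{K_1}\ast\cdots\ast\chi_{K_m}=|K_1|\cdots|K_m|$, equals $|K_1|\cdots|K_m|/M(\mcuerpos)$. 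Rearranging produces the first inequality.

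For the second inequality, letting $\theta\to 1^-$ in Corollary \ref{increasinginthetam} and using $\lim_{\theta\to 1^-}(1-\theta)/(1-\theta^{1/((m-1)n)})=(m-1)n$ yields
$$\sumam{\theta_0}\subseteq (m-1)n\,(1-\theta_0^{1/((m-1)n)})\,C_1(\mcuerpos).$$
Taking volumes, integrating over $\theta_0\in[0,1]$, and invoking the same two identities (Beta integral on the right, Fubini on the left) gives
$$\frac{|K_1|\cdots|K_m|}{M(\mcuerpos)}\le ((m-1)n)^n\binom{mn}{n}^{-1}|C_1(\mcuerpos)|.$$
Multiplying through by $\binom{mn}{n}$ and using $((m-1)n)^n=(m-1)^n n^n$ gives the second inequality.

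The main bookkeeping point is that the exponent $1/((m-1)n)$, inherited from the $1/((m-1)n)$-concavity of $\chi_{K_1}\ast\cdots\ast\chi_{K_m}$ established in Proposition \ref{incconvexitym}, still produces $\binom{mn}{n}^{-1}$ as the value of the relevant Beta integral; this is the key computation $\int_0^1(1-u)^n(m-1)n\,u^{(m-1)n-1}\,du = (m-1)n\cdot B(n+1,(m-1)n) = n!\,((m-1)n)!/(mn)!$. Once this is in hand the argument runs exactly as in the case $m=2$.
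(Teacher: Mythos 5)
Your proposal is correct and is exactly the argument the paper intends: the paper's proof is just the single sentence "The proof runs as in Theorem \ref{Zhangextension} and Proposition \ref{rmk:RoS3} resp., using Corollary \ref{increasinginthetam}, instead of Corollary \ref{increasingintheta}," and you have carried out precisely that generalization, including the correct Beta-integral evaluation $\int_0^1(1-\theta^{1/((m-1)n)})^n\,d\theta=\binom{mn}{n}^{-1}$, the Fubini/layer-cake identity giving $|K_1|\cdots|K_m|/M(\mcuerpos)$, and the limit $(1-\theta)/(1-\theta^{1/((m-1)n)})\to(m-1)n$.
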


\begin{proof}
The proof runs as in Theorem \ref{Zhangextension} and Proposition \ref{rmk:RoS3} resp., using  Corollary \ref{increasinginthetam}, instead of Corollary \ref{increasingintheta}.
\end{proof}

Regarding the study of equality cases, we will show that equality never occur in  (\ref{inclusion1:convexitym}) for all $\lambda_1,\lambda_2$ such that $\lambda_1+\lambda_2=1$  unless $m=2$ or $n=1$. That implies that extensions of Zhang, Roger-Shephard and Brunn-Minkowski type inequalities are not sharp for $m>2$ and $n>1$.

The following result can be proved as in Proposition \ref{equalityconvexity1}.

\begin{prop}\label{equalitycases:convexitym}
Let $\mcuerpos\subset\Ren$ be convex bodies satisfying (\ref{Mmzero}) and such that for any $\theta_1,\theta_2, \lambda_1,\lambda_2 \in[0,1]$ such that $\lambda_1+\lambda_2= 1$ we have
    \begin{equation}
    \label{equality:convexitym}
    \lambda_1 (\sumam{\theta_1})+ \lambda_2(\sumam{\theta_2})
    =
    \sumam{\theta},
    \end{equation}
    where $\theta^{\frac{1}{(m-1)n}}=\lambda_1\theta_1^{\frac{1}{(m-1)n}}+\lambda_2\theta_2^{\frac{1}{(m-1)n}}.$ Then for every $x\in \sumam{}$ $\Omega_{m-1}(x)$ is homothetic to $\Omega_{m-1}(0)$ .
\end{prop}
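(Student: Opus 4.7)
The plan is to follow closely the proof of Proposition \ref{equalityconvexity1}, working in dimension $(m-1)n$ rather than $n$. The intersection $K\cap(x-L)\subset\Ren$ will be replaced by the convex body $\Omega_{m-1}(x)\subset\Re^{(m-1)n}$ introduced in the proof of Proposition \ref{incconvexitym}; the Brunn-Minkowski inequality will be applied in $\Re^{(m-1)n}$; and Remark \ref{convolution1:equalitym} will play the role previously played by Corollary \ref{convolution1:equality}.

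First, I specialise the hypothesis to $\theta_1=1$, $\theta_2=0$, $\lambda_1=\theta^{\frac{1}{(m-1)n}}$ and $\lambda_2=1-\theta^{\frac{1}{(m-1)n}}$, which immediately yields the higher-dimensional analogue of condition \textit{(ii)} of Proposition \ref{equalityconvexity1}, namely
$$
\theta^{\frac{1}{(m-1)n}}(\sumam{1})+\left(1-\theta^{\frac{1}{(m-1)n}}\right)(\sumam{})=\sumam{\theta}
$$
for every $\theta\in[0,1]$. Next, I fix $x$ in the interior of $\sumam{}$ and split into two cases. If $x\in\sumam{1}$, Remark \ref{convolution1:equalitym} already says that $\Omega_{m-1}(x)$ is a translate (hence a homothetic copy) of $\Omega_{m-1}(0)$. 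Otherwise, Remark \ref{boundarym} furnishes a unique $\theta\in(0,1)$ with $x\in\partial(\sumam{\theta})$, and the displayed equality above lets me write
$$
x=\theta^{\frac{1}{(m-1)n}}x_1+\left(1-\theta^{\frac{1}{(m-1)n}}\right)x_2
$$
for some $x_1\in\sumam{1}$ and $x_2\in\sumam{}$. Since $|\Omega_{m-1}(x)|=\theta M(\mcuerpos)$ and $|\Omega_{m-1}(x_1)|=M(\mcuerpos)$, the inclusion \eqref{inclusion0:convexitym} together with the Brunn-Minkowski inequality \eqref{BM:convexitym} both hold with equality. The equality case of Brunn-Minkowski in $\Re^{(m-1)n}$ then forces $\Omega_{m-1}(x)$, $\Omega_{m-1}(x_1)$ and $\Omega_{m-1}(x_2)$ to be pairwise homothetic, and combining this with the fact that $\Omega_{m-1}(x_1)$ is a translate of $\Omega_{m-1}(0)$ yields the desired homothety between $\Omega_{m-1}(x)$ and $\Omega_{m-1}(0)$. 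Points on $\partial(\sumam{})$ are treated trivially since $\Omega_{m-1}(x)$ then has zero volume.

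The main subtlety I expect is the careful treatment of the equality case of Brunn-Minkowski when $\Omega_{m-1}(x_2)$ is degenerate (possibly of dimension less than $(m-1)n$) in $\Re^{(m-1)n}$. This technical point is already present in the two-body argument of Proposition \ref{equalityconvexity1}, and provided one interprets homothety in the suitable sense when a degenerate scaling factor of zero appears, the reasoning transfers verbatim from the two-body to the $m$-body case.
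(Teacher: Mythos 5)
Your proposal is correct and matches the paper's intended approach: the paper itself simply states that Proposition \ref{equalitycases:convexitym} "can be proved as in Proposition \ref{equalityconvexity1}", and you have accurately carried out that transfer, replacing $K\cap(x-L)$ by $\Omega_{m-1}(x)$, Lemma \ref{boundary} by Remark \ref{boundarym}, Corollary \ref{convolution1:equality} by Remark \ref{convolution1:equalitym}, and applying Brunn--Minkowski in dimension $(m-1)n$. You also correctly flag the one genuine subtlety, namely that $\Omega_{m-1}(x_2)$ may have zero $(m-1)n$-volume; as in the two-body case, it is nonempty (since $x_2$ lies in the Minkowski sum) so the equality forces it to reduce to a single point, after which the inclusion $\Omega_{m-1}(x)\supseteq\lambda_1\Omega_{m-1}(x_1)+\lambda_2\{p\}$ together with equal volumes gives the required homothety.
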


Then we will show that this consequence can not occur for $m\ge3$.

\begin{prop}\label{impossiblem}
Let $m\ge3$ and $\mcuerpos\subset\Ren$ be convex bodies satisfying (\ref{Mmzero}). Then it is not possible that $\Omega_{m-1}(x)$ is homothetic to $\Omega_{m-1}(0)$ for every $x\in \sumam{}$.
\end{prop}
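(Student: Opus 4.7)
I would argue by contradiction, supposing $\Omega_{m-1}(x)$ is homothetic to $\Omega_{m-1}(0)$ for every $x\in K_1+\cdots+K_m$. The strategy is to project this homothety in $\Re^{(m-1)n}$ down to a family of homothetic $n$-dimensional sections and apply Soltan's theorem (Theorem \ref{soltanth}).

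A straightforward elimination of the inner variables $t_1,\ldots,t_{m-2}$ identifies the projection onto the last $n$-block as
$$
\proj_{t_{m-1}}\Omega_{m-1}(x)=(K_1+\cdots+K_{m-1})\cap(x-K_m).
$$
Affine projections map positive homothets to positive homothets, so all these $n$-dimensional intersections are homothetic to the one at $x=0$. Theorem \ref{soltanth} then forces $K_1+\cdots+K_{m-1}$ and $-K_m$ to be homothetic $n$-simplices. Invoking the permutation invariance of the convolution (property (b2)) and repeating the argument with each $K_i$ in the last slot, we obtain that for every $i$, $\sum_{j\neq i}K_j$ is an $n$-simplex homothetic to $-K_i$; in particular every $K_i$ is itself an $n$-simplex.

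The final step uses the classical indecomposability of the simplex (every Minkowski summand of an $n$-simplex is a point or a positive homothet of it; see Schneider's book). Since each $K_j$ with $j\neq i$ is a summand of the simplex $\sum_{k\neq i}K_k$, it must be a positive homothet of a fixed simplex $\Delta$; combining two choices of $i$ (which is possible precisely because $m\geq 3$) shows that in fact $K_i=\lambda_i\Delta+z_i$ for all $i$, with $\lambda_i>0$. The homothety ``$\sum_{j\neq i}K_j\sim -K_i$'' now reads $\bigl(\sum_{j\neq i}\lambda_j\bigr)\Delta=c\lambda_i(-\Delta)+v$ for some $c>0$ and $v\in\Re^n$; a volume comparison gives $c\lambda_i=\sum_{j\neq i}\lambda_j$, so $\Delta=-\Delta+v'$, i.e.\ $\Delta$ is centrally symmetric, which is impossible for an $n$-simplex with $n\geq 2$.

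The main obstacle is this final step: the appeal to simplex indecomposability is the essential non-elementary ingredient, and one must track homothety consistently as positive scaling plus translation so that volume comparison pins down the scaling factor. The case $n=1$ lies outside this approach, since segments are trivially centrally symmetric; it would need a separate treatment, for example by comparing the value of $\chi_{K_1}\ast\cdots\ast\chi_{K_m}(x)$ directly against the $(m-1)n$-th power of a projection-length ratio.
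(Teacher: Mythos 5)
Your proof is correct but takes a genuinely different route from the paper. Both proofs begin identically: project $\Omega_{m-1}(x)$ onto the $t_{m-1}$ coordinate to get $(K_1+\cdots+K_{m-1})\cap(x-K_m)$, observe these sections are all homothetic, and invoke Soltan's Theorem \ref{soltanth} to conclude $K_1+\cdots+K_{m-1}$ and $-K_m$ are homothetic $n$-simplices. From there the paths diverge. The paper keeps the ordering fixed and additionally projects onto $t_{m-2}$, obtaining that $K_1+\cdots+K_{m-2}$ and $-(K_{m-1}+K_m)$ are homothetic simplices, then applies Lemma \ref{sum_simplices1} (its own two-summand form of simplex indecomposability) to deduce $K_m$ is homothetic to $-K_m$. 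You instead use permutation invariance to run the single $t_{m-1}$-projection for each ordering, getting $\sum_{j\neq i}K_j\sim -K_i$ for every $i$, and then invoke the classical indecomposability of the simplex to reduce every $K_i$ to a positive homothet of one simplex $\Delta$; the relation $\sum_{j\neq i}K_j\sim -K_i$ then forces $\Delta\sim -\Delta$. The contradiction is the same, but your argument is more symmetric and avoids the two-variable projection; in exchange it is less self-contained, since you cite the general indecomposability theorem where the paper proves and uses only the elementary special case (Lemma \ref{sum_simplices1}, which is the real ingredient either way). The hypothesis $m\geq3$ also enters differently: you need it to have a common summand when comparing two choices of $i$, while the paper needs it for the second projection to exist. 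Both your proof and the paper's leave $n=1$ aside — the paper's own proof ends with ``unless $n=1$'' — so you are right to flag that case.
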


For the proof, we will use the following fact
on sum of simplices, which is of independent interest.
%Although it may be already known, we have not found it explicitly written out in the literature.

\begin{lem}\label{sum_simplices1}
Let $K,L\subset\Ren$ be $n$-dimensional  convex bodies. If $K+L$ is an $n$-dimensional simplex, then $K$ and $L$ are both $n$-dimensional  homothetic simplices.
\end{lem}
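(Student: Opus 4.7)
The plan is to proceed in three stages: first show via the identity $h_{K+L}=h_K+h_L$ that both $K$ and $L$ are polytopes with at most $n+1$ vertices; then upgrade this to $n$-simplices; and finally extract the homothety from the fact that $K$, $L$ and $K+L$ share the same normal fan.

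For the first stage, let $S:=K+L$ have vertices $v_0,\dots,v_n$ and corresponding normal cones $R_i=N(S,v_i)$, which are full-dimensional and tile $\Re^n$. On the interior of each $R_i$ the support function $h_S(u)=\langle v_i,u\rangle$ is linear, so $h_K+h_L$ is linear there. The key elementary fact is that if two convex functions sum to a linear function on an open set, each of them is itself linear on that set: indeed each then equals a linear function minus a convex function, hence is simultaneously convex and concave, hence affine (and by positive homogeneity of support functions, actually linear). Therefore on $R_i$ one has $h_K(u)=\langle a_i,u\rangle$ and $h_L(u)=\langle b_i,u\rangle$ for some $a_i\in K$ and $b_i\in L$ with $a_i+b_i=v_i$. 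Since $a_i\in K$ gives $\langle a_i,u\rangle\le h_K(u)$ globally, with equality on $R_i$, one obtains $h_K(u)=\max_i\langle a_i,u\rangle$ and hence $K=\conv\{a_0,\dots,a_n\}$; the same for $L$.

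Because $K$ is $n$-dimensional, the points $a_0,\dots,a_n$ must be affinely independent, so $K$ is an $n$-simplex; similarly $L$ is an $n$-simplex with vertices $b_i=v_i-a_i$. Moreover each $a_i$ maximizes every direction in $R_i$, so $N(K,a_i)\supseteq R_i$; but the $n+1$ cones $\{N(K,a_i)\}$ already tile $\Re^n$ with disjoint interiors, forcing $N(K,a_i)=R_i$, and analogously $N(L,b_i)=R_i$. Thus $K$, $L$ and $S$ share the same normal fan. For the homothety step, translate so that $a_0=b_0=0$; then the edge directions from this vertex are determined (up to positive scalars) by $R_0=\{u:\langle d_j,u\rangle\le 0\;\forall j\}$, giving $a_i=\alpha_i d_i$ and $b_i=\beta_i d_i$ for common unit vectors $d_1,\dots,d_n$. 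Writing out the inequalities $\alpha_i\langle d_i,u\rangle\ge\alpha_j\langle d_j,u\rangle$ (for $j\neq 0,i$) that cut out $N(K,a_i)$ and comparing with the corresponding ones for $N(L,b_i)$ forces $\alpha_j/\alpha_i=\beta_j/\beta_i$ for every pair $(i,j)$; so $\beta_i=c\,\alpha_i$ for a single constant $c>0$, and $L=cK$ after translation, giving the required homothety. The main obstacle I anticipate is precisely this last bookkeeping step: ensuring that the many normal-cone equalities yield one common proportionality constant rather than different ones for different indices.
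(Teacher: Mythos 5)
Your argument is correct, but it takes a genuinely different route from the paper's. The paper proceeds by decomposing each vertex $w_i$ of the simplex $S=K+L$ as a sum of extreme points $u_i+v_i$, proving the inclusion $L\subseteq\conv\{w_i-u_i\}$ directly from the support-function inequality $h_K+h_L\le h_K+\max_i\langle\cdot,w_i-u_i\rangle$, and then obtaining the homothety by a counting argument: the common refinement of the two vertex-normal partitions $\{A_i\}$ and $\{B_j\}$ of $S^{n-1}$ would have more than $n+1$ full-dimensional pieces unless the partitions coincide, contradicting that $S$ has only $n+1$ vertices. You instead observe that $h_K+h_L$ is linear on the interior of each normal cone $R_i$ of $S$, and use the fact (a convex function that is the difference of a linear and a convex function is affine, hence by homogeneity linear) to conclude that $h_K$ and $h_L$ are each linear there; this identifies $K=\conv\{a_i\}$ and $L=\conv\{b_i\}$ and forces $N(K,a_i)=R_i=N(L,b_i)$, and you then extract the single proportionality constant by matching the facet normals $\alpha_i d_i-\alpha_j d_j$ and $\beta_i d_i-\beta_j d_j$ of these simplicial cones. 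Both proofs are correct and of comparable length; yours makes the underlying structural fact explicit (a Minkowski summand's normal fan coarsens that of the sum, and a simplicial fan admits no strict coarsening), and in that sense is closer to the classical indecomposability argument for simplices that the paper alludes to in Remark \ref{sum_simplices2}, whereas the paper's counting step is shorter and avoids having to verify the common-constant bookkeeping you flagged. One small point worth noting: your step 'since $a_i\in K$ gives $\langle a_i,u\rangle\le h_K(u)$ globally, with equality on $R_i$, one obtains $h_K=\max_i\langle a_i,\cdot\rangle$' tacitly uses that the cones $R_i$ cover $\Ren$, which holds because they are the normal cones at the vertices of the polytope $S$, and the affine independence of $a_0,\dots,a_n$ follows from $K$ being $n$-dimensional as you say; both of these are fine, but they are load-bearing and deserve an explicit word.
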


\begin{proof} Denote by $h_C(x)=\max\{\langle x,y\rangle: y\in C\}$ the support function of a compact set $C\subset\Ren$.

Let us write $K+L=\conv\{w_0,\dots w_n\}$. We will show that $L$ is a simplex.

Each $w_i$ is an extreme point of $K+L$. Then $w_i=u_i+v_i$ where $u_i\in K$ and $v_i\in L$ are extreme points of $K$ and $L$ resp. In particular, $v_i=w_i-u_i\in L$ and so $\conv\{w_0-u_0,\dots w_n-u_n\}\subseteq L$. Also
\begin{align*}
h_K(x)+h_L(x)&=h_{K+L}(x)=\max_{0\le i \le n}\langle x,w_i\rangle
\\
&
=
\max_{0\le i \le n}\left(\langle x,u_i\rangle+\langle x,w_i-u_i\rangle\right)
\\
&\le
\max_{0\le i \le n}\langle x,u_i\rangle+\max_{0\le i \le n}\langle x,w_i-u_i\rangle
\\
&\le
h_K(x)+\max_{0\le i \le n}\langle x,w_i-u_i\rangle
\end{align*}
Thus
$
h_L(x)\le \displaystyle\max_{0\le i \le n}\langle x,w_i-u_i\rangle,
$
so $L\subseteq \conv\{w_0-u_0,\dots w_n-u_n\}$, and $L$ is a simplex.

Now, write $K=\conv\{u_0,\dots, u_n\}$ and $L=\conv\{v_0,\dots v_n\}$. We will prove that they are homothetic simplices.

Let $A_i=\{s\in S^{n-1}: h_K(s)=\langle x,u_i\rangle\}$ and $B_i=\{s\in S^{n-1}: h_L(s)=\langle x,v_i\rangle\}$. $K$ and $L$ are homothetic if and only if both partitions of $S^{n-1}$
$$
S^{n-1}=\bigcup_{i=0}^n A_i=\bigcup_{i=0}^n B_i
 $$
are identical. Assume they are not the same, then the partition $S^{n-1}=\displaystyle\bigcup_{i,j=0}^nC_{i,j}$ where $C_{i,j}=\{s\in S^{n-1}: h_{K+L}(s)=\langle x,u_i+v_j\rangle\}$ has more than $n+1$ elements, since
$$
h_{K+L}(s)=h_K(s)+h_L(s)
=
\max_{0\le i \le n}\langle x,u_i\rangle
+
\max_{0\le j \le n}\langle x,v_j\rangle
=
\max_{0\le i,j \le n}\langle x,u_i+v_j\rangle.
$$
Then $K+L$ is not a simplex.
\end{proof}

\begin{rmk}\label{sum_simplices2}
An argument similar to that used in the remark before Theorem 3.2.3 in \cite{Schn1} leads to the same claim. However, we have included a direct proof of it for the sake of completeness.
%
%   The sketch of the proof is the following: If $M=K+L$ is a simplex, then $L$ is a summand of $M$, so by Lemma 3.1.8 in \cite{Schn1}, $(M\sim L) + L = M$.
%   Since Minkowski addition is cancellative, $M\sim L= K$, and then $K$ is an intersection of translates of $M$.
%   Therefore $K$ is a simplex homothetic to $M$.
%   The same is applied to $L$.
\end{rmk}

\begin{proof}[Proof of Proposition \ref{impossiblem}]
Notice that
\begin{align*}
\Omega_{m-1}(x)=\{(t_1,\dots t_{m-1})\in\Re^{(m-1)n}:\,
&(t_1,\dots t_{m-2})\in \Omega_{m-2}(t_{m-1}),
\\
&t_{m-1}\in  (x-K_m)\cap(K_1+\cdots+K_{m-1})
\}.
\end{align*}
and that $\Omega_{m-2}(t_{m-1})$ is non-empty  if and only if $t_{m-1}\in K_1+\cdots K_{m-1}$. Then the projection onto the $t_{m-1}$ coordinate is
$$
P_{m-1}(\Omega_{m-1}(x))=(x-K_m)\cap(K_1+\cdots+K_{m-1}).
$$

Suppose that $\Omega_{m-1}(x)$ is homothetic to $\Omega_{m-1}(0)$ for every $x\in \sumam{}$. Then their projections are also homothetic. Then $(x-K_m)\cap(K_1+\cdots+K_{m-1})$ is homothetic to $(-K_m)\cap(K_1+\cdots+K_{m-1})$ for any $x\in \sumam{}$ and Soltan's Theorem \ref{soltanth} implies that $K_1+\cdots K_{m-1}$ and $-K_m$ are homothetic simplices.

On the other hand, the projection onto the $t_{m-2}$ coordinate is
\begin{align*}
P_{m-2}(\Omega_{m-1}(x))
&=
\bigcup_{t_{m-1}\in x-K_m}P_{m-2}\left(\Omega_{m-2}(t_{m-1})\right)
\\&=
\bigcup_{t_{m-1}\in x-K_m}(t_{m-1}-K_{m-1})\cap(K_1+\cdots+K_{m-2})
\\&=
(x-(K_{m-1}+K_m))\cap(K_1+\cdots+K_{m-2})
\end{align*}
and so we have that for every $x\in\sumam{}$, $(x-(K_{m-1}+K_m))\cap(K_1+\cdots+K_{m-2})$ is homothetic to $(-(K_{m-1}+K_m))\cap(K_1+\cdots+K_{m-2})$. Soltan's Theorem \ref{soltanth} shows again that
$K_1+\cdots+K_{m-2}$ and $-(K_{m-1}+K_m)$ are homothetic simplices.

If $K_m$ and $K_{m-1}+K_m$ are both simplices, by Lemma \ref{sum_simplices1}, $K_{m-1}$ is also a simplex homothetic to $K_m$. But
$
(K_1+\cdots+K_{m-1})+K_m
$
is a simplex, so Lemma \ref{sum_simplices1} shows that $K_1+\cdots+K_{m-1}$ and $K_m$ are homothetic. But then $K_m$ is a simplex homothetic to $-K_m$, a contradiction, unless $n=1$.
\end{proof}

 \section*{Acknowledgements}
Part of this work was done while the first two named authors were attending the Thematic Program on Asymptotic Geometric Analysis in Fall 2010 at the Fields Institute in Toronto, where the first named author enjoyed the J. Marsden's postdoctoral fellowship. We appreciate the hospitality. We are indebted to Prof. A. Giannopoulos for many helpful discussions and for pointing out to us several references at the early stages of this work. We also would like to thank Steven Taschuk for pointing out to us the alternative proof of Lemma \ref{sum_simplices1} exposed in Remark \ref{sum_simplices2}. Finally, the authors thank the referee for suggesting the study of equality cases and other improvements in exposition.

\bibliography{Thesis}

\begin{thebibliography}{10}

\bibitem{Kie}
K.~Kiener, ``{E}xtremalit\"{a}t von {E}llipsoiden und die {F}altungsungleichung
  von {S}obolev,'' {\em Archiv der Mathematik}, vol.~46, pp.~162--168, 1986.
\newblock 10.1007/BF01197494.

\bibitem{Sch1}
M.~Schmuckenschl\"{a}ger, ``The distribution function of the convolution square
  of a convex symmetric body in $\mathbb{R}^n$,'' {\em Israel Journal of
  Mathematics}, vol.~78, pp.~309--334, 1992.
\newblock 10.1007/BF02808061.

\bibitem{M-R-S}
M.~Meyer, S.~Reisner, and M.~Schmuckenschl\"{a}ger, ``The volume of the
  intersection of a convex body with its translates,'' {\em Mathematika},
  vol.~40, no.~02, pp.~278--289, 1993.

\bibitem{Sch2}
M.~Schmuckenschl{\"a}ger, ``Petty's projection inequality and {S}antalo's
  affine isoperimetric inequality,'' {\em Geom. Dedicata}, vol.~57, no.~3,
  pp.~285--295, 1995.

\bibitem{Tso}
A.~Tsolomitis, ``Convolution bodies and their limiting behavior,'' {\em Duke
  Math. J.}, vol.~87, no.~1, pp.~181--203, 1997.

\bibitem{Schn2}
R.~Schneider, ``Mixed polytopes,'' {\em Discrete Comput. Geom.}, vol.~29,
  no.~4, pp.~575--593, 2003.

\bibitem{Bo-Fe}
T.~Bonnesen and W.~Fenchel, {\em Theory of convex bodies}.
\newblock Moscow, ID: BCS Associates, 1987.
\newblock Translated from the German and edited by L. Boron, C. Christenson and
  B. Smith.

\bibitem{Schn1}
R.~Schneider, {\em Convex bodies: the {B}runn-{M}inkowski theory}, vol.~44 of
  {\em Encyclopedia of Mathematics and its Applications}.
\newblock Cambridge: Cambridge University Press, 1993.

\bibitem{Gar2}
R.~J. Gardner, {\em Geometric tomography}, vol.~58 of {\em Encyclopedia of
  Mathematics and its Applications}.
\newblock Cambridge: Cambridge University Press, second~ed., 2006.

\bibitem{M-S}
V.~D. Milman and G.~Schechtman, {\em Asymptotic theory of finite-dimensional
  normed spaces}, vol.~1200 of {\em Lecture Notes in Mathematics}.
\newblock Berlin: Springer-Verlag, 1986.
\newblock With an appendix by M. Gromov.

\bibitem{Had}
H.~Hadwiger, {\em Vorlesungen \"uber {I}nhalt, {O}berfl\"ache und
  {I}soperimetrie}.
\newblock Berlin: Springer-Verlag, 1957.

\bibitem{Fir}
W.~Firey, ``{$p$}-means of convex bodies,'' {\em Math. Scand.}, vol.~10,
  pp.~17--24, 1962.

\bibitem{Lut1}
E.~Lutwak, ``The {B}runn-{M}inkowski-{F}irey theory. {I}. {M}ixed volumes and
  the {M}inkowski problem,'' {\em J. Differential Geom.}, vol.~38, no.~1,
  pp.~131--150, 1993.

\bibitem{Lut2}
E.~Lutwak, ``The {B}runn-{M}inkowski-{F}irey theory. {II}. {A}ffine and
  geominimal surface areas,'' {\em Adv. Math.}, vol.~118, no.~2, pp.~244--294,
  1996.

\bibitem{Bor}
C.~Borell, ``The {B}runn-{M}inkowski inequality in {G}auss space,'' {\em
  Invent. Math.}, vol.~30, no.~2, pp.~207--216, 1975.

\bibitem{Vit}
R.~Vitale, ``The {B}runn-{M}inkowski inequality for random sets,'' {\em J.
  Multivariate Anal.}, vol.~33, no.~2, pp.~286--293, 1990.

\bibitem{Uhr}
B.~Uhrin, ``Curvilinear extensions of the {B}runn-{M}inkowski-{L}usternik
  inequality,'' {\em Adv. Math.}, vol.~109, no.~2, pp.~288--312, 1994.

\bibitem{B-L}
H.~J. Brascamp and E.~H. Lieb, ``On extensions of the {B}runn-{M}inkowski and
  {P}r\'ekopa-{L}eindler theorems, including inequalities for log concave
  functions, and with an application to the diffusion equation,'' {\em J.
  Functional Analysis}, vol.~22, no.~4, pp.~366--389, 1976.

\bibitem{Bart1}
F.~Barthe, ``Restricted {P}r\'ekopa-{L}eindler inequality,'' {\em Pacific J.
  Math.}, vol.~189, no.~2, pp.~211--222, 1999.

\bibitem{Sz-V}
S.~Szarek and D.~Voiculescu, ``Volumes of restricted {M}inkowski sums and the
  free analogue of the entropy power inequality,'' {\em Comm. Math. Phys.},
  vol.~178, no.~3, pp.~563--570, 1996.

\bibitem{Gar1}
R.~J. Gardner, ``The {B}runn-{M}inkowski inequality,'' {\em Bull. Amer. Math.
  Soc. (N.S.)}, vol.~39, no.~3, pp.~355--405, 2002.

\bibitem{Pet}
C.~M. Petty, ``Isoperimetric problems,'' in {\em Proceedings of the
  {C}onference on {C}onvexity and {C}ombinatorial {G}eometry ({U}niv.
  {O}klahoma, {N}orman, {O}kla., 1971)}, pp.~26--41, Dept. Math., Univ.
  Oklahoma, Norman, Okla., 1971.

\bibitem{Zha}
G.~Y. Zhang, ``Restricted chord projection and affine inequalities,'' {\em
  Geom. Dedicata}, vol.~39, no.~2, pp.~213--222, 1991.

\bibitem{R-S1}
C.~A. Rogers and G.~C. Shephard, ``The difference body of a convex body,'' {\em
  Arch. Math. (Basel)}, vol.~8, pp.~220--233, 1957.

\bibitem{Ro-Sh2}
C.~A. Rogers and G.~C. Shephard, ``Convex bodies associated with a given convex
  body,'' {\em J. London Math. Soc.}, vol.~33, pp.~270--281, 1958.

\bibitem{Br}
H.~Brunn, ``Referat \"{u}ber eine {A}rbeitet: {E}xacte {G}rundlagen f\"{u}r
  eine {T}heorie der {O}vale,'' {\em S.-B. Bayer. Akad. Wiss.}, pp.~93--111,
  1894.

\bibitem{soltan}
V.~Soltan, ``A characterization of homothetic simplices,'' {\em Discrete
  Comput. Geom.}, vol.~22, no.~2, pp.~193--200, 1999.

\end{thebibliography}
\bibliographystyle{ieeetr}

\end{document}